\newcommand{\p}{\mathfrak{p}}
\newcommand{\Prim}{\mathrm{Prim}}
\newcommand{\Conj}{\mathrm{Conj}}
\newcommand{\vol}{\mathrm{vol}}
\newcommand{\tr}{\mathrm{tr}}
\newcommand{\li}{\mathrm{li}}
\newcommand{\Ker}{\mathrm{Ker}}
\newcommand{\lcm}{\mathrm{lcm}}
\newcommand{\mr}{\mathrm}
\newcommand{\Nsf}{N_{\mathrm{sf}}}
\newcommand{\as}{\quad\text{as}\quad}
\newcommand{\tinf}{\to\infty}
\newcommand{\disp}{\displaystyle}
\newcommand{\bsla}{\backslash}
\newcommand{\D}{\mathfrak{D}}
\newcommand{\cC}{\mathcal{C}}
\newcommand{\cT}{\mathcal{T}}
\newcommand{\cA}{\mathcal{A}}
\newcommand{\bC}{\mathbb{C}}
\newcommand{\bR}{\mathbb{R}}
\newcommand{\bZ}{\mathbb{Z}}
\newcommand{\noi}{\noindent}
\newcommand{\divset}{\hspace{3pt}|\hspace{3pt}}
\newcommand{\bigdivset}{\hspace{3pt}\big|\hspace{3pt}}
\newcommand{\Bigdivset}{\hspace{3pt}\Big|\hspace{3pt}}
\newcommand{\mmid}{\hspace{2pt}||\hspace{2pt}}
\newcommand{\gam}{\gamma}
\newcommand{\Gam}{\Gamma}
\newcommand{\hGam}{\hat{\Gamma}}
\newcommand{\slt}{\mathrm{SL}_2}
\newcommand{\psl}{\mathrm{PSL}_2}
\newcommand{\sr}{\mathrm{SL}_2(\bR)}
\newcommand{\sz}{\mathrm{SL}_2(\bZ)}
\newcommand{\vbpt}{\vspace{6pt}}
\newtheorem{thm}{Theorem}[section]
\newtheorem{prop}[thm]{Proposition}
\newtheorem{lem}[thm]{Lemma}
\newtheorem{rem}[thm]{Remark}
\newtheorem{cor}[thm]{Corollary}
\newtheorem{fact}{Fact}[section]
\numberwithin{equation}{section}
\title{Asymptotic formulas for class number sums of indefinite binary quadratic forms 
in arithmetic progressions}
\author{Yasufumi Hashimoto
\thanks{Partially support by JSPS Grant-in-Aid for Young Scientists (B) no. 20740027.}}
\date{}
\begin{document}
\markboth
{Y. Hashimoto}
{Class number sum}
\pagestyle{myheadings}

\maketitle
\renewcommand{\thefootnote}{}
\footnote{MSC2000: primary: 11E41; secondary: 11M36}

\begin{abstract}
It is known that there is a one-to-one correspondence between equivalence classes of
primitive indefinite binary quadratic forms and primitive hyperbolic conjugacy classes
of the modular group. Due to such a correspondence, Sarnak obtained the asymptotic
formula for the class number sum in order of the fundamental unit by using the prime
geodesic theorem for the modular group. In the present paper, we propose asymptotic
formulas of the class number sums over discriminants in arithmetic progressions. Since
there are relations between the arithmetic properties of the discriminants and the
conjugacy classes in the finite groups given by the modular group and its congruence
subgroups, we can get the desired asymptotic formulas by arranging the Tchebotarev-type
prime geodesic theorem. While such asymptotic formulas were already given by
Raulf, the approaches are quite different, the expressions of the leading terms of our
asymptotic formulas are simpler and the estimates of the reminder terms are sharper.
 \end{abstract}

\section{Introduction}

For an integer $D$, let $h(D)$ be the class number of $D$ in the narrow sense. 
In Sects. 302--304 of \cite{G}, Gauss stated the mean value formula for $h(D)$ without proof.
His formula for $D<0$ was proven by Lipschitz \cite{Li} and Mertens \cite{Me} 
and was improved by Vinogradov \cite{Vi}. 
For $D>0$, Siegel \cite{Si} proved that 
\begin{align}\label{siegel}
\sum_{\begin{subarray}{c}0<D<x\end{subarray}}h(D)\log{\epsilon(D)}
\sim \frac{\pi^2}{18\zeta(3)}x^{3/2}\as x\tinf,
\end{align}
where $\epsilon(D)$ is the fundamental unit of $D$ in the narrow sense and $\zeta(3):=\sum_{n\geq 1}n^{-3}$, 
and Shintani \cite{Sh} improved it. 
Such an asymptotic formulas has been further improved and extended in several ways 
by the theory of prehomogeneous vector spaces (see \cite{Sh}, \cite{Da}, \cite{GH} etc). 

On the other hand, Sarnak \cite{Sa1} obtained the following asymptotic formula.  
\begin{align}\label{sarnak1}
\sum_{\begin{subarray}{c} D>0,\epsilon(D)<x\end{subarray}}h(D)\log{\epsilon(D)}
\sim\frac{1}{2}x^2 \as x\tinf.
\end{align}
This yields that 
\begin{align}\label{sarnak2}
\sum_{\begin{subarray}{c} D>0,\epsilon(D)<x\end{subarray}}h(D)
\sim\li(x^2) \as x\tinf,
\end{align}
where $\li(x):=\int_2^x(\log{t})^{-1}dt$.  
The asymptotic formulas \eqref{sarnak1} and \eqref{sarnak2} follows from the prime geodesic theorem 
(see, e.g. \cite{Se} and \cite{He}) for $\sz$; 
\begin{align*}
\#&\{\text{$[\gam]$: the primitive hyperbolic conjugacy classes of $\sz$,} \\
&\text{the larger eigenvalue of $\gam$ is less than $x$}\}\sim \li(x^2) \as x\tinf
\end{align*} 
and the one-to-one correspondence between the equivalence classes 
of the primitive indefinite binary quadratic forms and the primitive conjugacy classes of $\sz$. 
Such asymptotic formulas have been extended to 
the binary quadratic forms over imaginary quadratic fields \cite{Sa2} 
and the ternary quadratic forms \cite{DH} by the theory of trace formulas. 

In the present paper, we study the growth of Sarnak-type class number sums over $D\equiv a \bmod{n}$ for given $a$ and $n$. 
The approach is as follows; (i) describe the relation between the
arithmetic property of the discriminants and the conjugacy classes in $\rm{PSL2}(\bZ/n\bZ)$, 
(ii) write down the Tchebotarev-type prime geodesic theorems (\cite{Sa1} and \cite{Su}) as asymptotic formulas
for partial class number sums and (iii) arrange such asymptotic formulas. The main result,
Theorem \ref{thm1}, gives the detail expressions of the coefficients of the leading terms and (not
necessarily best-possible but) non-trivial estimates of the reminder terms.

For such asymptotic formulas, Raulf \cite{Ra} already studied. The approach was quite
different, by reducing the problem to the estimation of the sums of special values of the
Dirichlet's $L$ functions with the class number formula. While she also gave expressions of the
coefficients of the leading terms, they were too complicated to be evaluated and
the estimates of the reminder terms are rough. Compared to her results, our leading terms are
simpler and the estimates of the reminder terms are sharper. In fact, the approximations
of the coefficients are smoothly computable as described in Example of Section 4.2. The
further advantage is that our approach will be arranged and extended easily. For example in
Theorem \ref{thm2}, discussing only the arithmetic properties of the discriminants, we obtain the
asymptotic formulas of the class number sums over fundamental discriminants in arithmetic
progressions. Moreover, after overcoming some problems for the prime geodesic theorems on
hyperbolic three manifolds, the results in this paper will be extended to the class number
sums of the binary quadratic forms over imaginary quadratic fields.

\section{Prime geodesic theorem} 

Let $H:=\{x+y\sqrt{-1}\in\bC\divset x,y\in\bR, y>0\}$ be the upper half plane with hyperbolic metric 
and $\Gam$ a discrete subgroup of $\sr$ with $\vol(\Gam\bsla H)<\infty$.
Denote by $\Prim(\Gam)$ the set of primitive hyperbolic conjugacy classes of $\Gam$
and $N(\gam)$ the square of the larger eigenvalue of $\gam\in\Prim(\Gam)$. 
For a finite dimensional unitary representation $\chi$, 
it is known that 
\begin{align}\label{pgt}
\pi_{\Gam,\chi}(x):=&\sum_{\begin{subarray}{c}\gam\in\Prim(\Gam)\\ N(\gam)<x\end{subarray}}\tr\chi(\gam)
=\sum_{0\leq \lambda_{j,\chi}\leq 1/4}\li\left(x^{s_{j,\chi}}\right)
+R_{\Gam,\chi}(x),
\end{align}
where $\lambda_{j,\chi}$ is the $j$-th eigenvalue of the Laplacian acting on the
sections of the flat vector bundle on $\Gam\bsla H$ associated to the 
representation $\chi$, $s_{j,\chi}:=1/2+\sqrt{1/4-\lambda_{j,\chi}}$ 
and $R_{\Gam,\chi}(x)$ is the essential reminder term for 
the asymptotic behavior of $\pi_{\Gam,\chi}(x)$. 
Note that \eqref{pgt} is called by the prime geodesic theorem for $(\Gam,\chi)$ 
and $R_{\Gam,\chi}(x)$ is presently bounded by $R_{\Gam,\chi}(x)=O(x^{3/4})$ 
(see, e.g. \cite{Se} and \cite{He}). 
For the implied constant of its estimate of $R_{\Gam,\chi}(x)$, 
Jorgenson and Kramer \cite{JK} proved the following lemma. 
\begin{lem}(\cite{JK} and also Lemma 9.6.2 in \cite{Bu})\label{jk}
Let $\Gam$ be a discrete subgroup of $\sr$ with $\vol(\Gam\bsla H)<\infty$ 
and $\chi$ a finite dimensional unitary representation of $\Gam$. 
Then, there exists a constant $A_{\Gam}>0$ depending on $\Gam$ such that 
\begin{align}
\left|R_{\Gam,\chi}(x)\right|\leq \dim\chi A_{\Gam}x^{3/4}
\end{align} 
for sufficiently large $x>0$.
\end{lem}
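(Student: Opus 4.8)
The plan is to pass from $\pi_{\Gam,\chi}(x)$ to a Chebyshev-type weighted sum and to read off the dependence on $\chi$ directly from the Selberg trace formula, the point being not the exponent $3/4$ (which is classical) but the uniformity of the implied constant in $\chi$. First I would recall that, by standard elementary manipulations (partial summation together with the removal of higher powers of primitive classes), the remainder $R_{\Gam,\chi}(x)$ in \eqref{pgt} is controlled, up to terms of order at most $x^{1/2}$, by the remainder in the asymptotic for $\psi_{\Gam,\chi}(x):=\sum_{N(\gam)\le x}\Lambda_{\Gam}(\gam)\,\tr\chi(\gam)$, where $\Lambda_{\Gam}(\gam)$ is the von Mangoldt weight attached to hyperbolic classes. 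The explicit (Weil-type) formula obtained from the trace formula expresses $\psi_{\Gam,\chi}(x)$ as a sum $\sum_j x^{s_{j,\chi}}/s_{j,\chi}$ over the spectral parameters, plus the contributions of the identity, elliptic and parabolic/continuous terms, so that bounding $R_{\Gam,\chi}$ reduces to estimating the spectral parameters with $\lambda_{j,\chi}>1/4$.

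The key input is Weyl's law for the twisted Laplacian $\Delta_\chi$ acting on the sections of the flat bundle $E_\chi$ associated to $\chi$. Writing $\lambda_{j,\chi}=1/4+r_{j,\chi}^2$, one has
\begin{align*}
\#\{\,j:|r_{j,\chi}|\le T\,\}=\frac{\dim\chi\cdot\vol(\Gam\bsla H)}{4\pi}\,T^2+O_{\Gam}\!\left(\dim\chi\cdot T\log T\right).
\end{align*}
The factor $\dim\chi$ appears because $E_\chi$ has rank $\dim\chi$, so locally $\Delta_\chi$ is $\dim\chi$ copies of the scalar Laplacian and the leading symbol is unchanged; the implied constant depends only on the geometry of $\Gam\bsla H$. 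Simultaneously, unitarity of $\chi$ gives $|\tr\chi(\gam)|\le\dim\chi$, which bounds every geometric term on the trace-formula side by $\dim\chi$ times the corresponding scalar contribution.

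With these two facts the usual smoothing argument goes through: one convolves the indicator of $[0,x]$ with an approximate identity of width $\delta$, applies the explicit formula, and optimizes $\delta$ to obtain the error $O(x^{3/4})$. The decisive observation is that every quantity entering this optimization — the spectral density, the number of eigenvalues in a dyadic window, and the size of the geometric terms — depends on $\chi$ only through the single overall factor $\dim\chi$ furnished by Weyl's law and by $|\tr\chi(\gam)|\le\dim\chi$, the remaining constants (volume, shortest geodesic length, and the cusp geometry) depending on $\Gam$ alone. Extracting $\dim\chi$ therefore yields $|R_{\Gam,\chi}(x)|\le \dim\chi\,A_{\Gam}\,x^{3/4}$ with $A_{\Gam}$ independent of $\chi$.

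The main obstacle is precisely this uniformity. One must check that the small spectral parameters $\lambda_{j,\chi}\in[0,1/4]$ — which are absorbed into the main term of \eqref{pgt} and not into $R_{\Gam,\chi}$ — are $O(\dim\chi)$ in number, so that their separation from the remainder does not introduce a $\chi$-dependent constant; Weyl's law again supplies this bound. One must also verify that the smoothing test function can be chosen once and for all, depending only on $\Gam$, so that the constants in the contour-shift/Tauberian step remain $\chi$-free after the factor $\dim\chi$ has been pulled out. This is exactly the uniform analysis carried out in \cite{JK} and in Lemma 9.6.2 of \cite{Bu}.
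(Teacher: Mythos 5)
The paper does not prove this lemma at all: it is quoted verbatim from \cite{JK} (and Lemma 9.6.2 of \cite{Bu}), so there is no internal argument to compare yours against. Your sketch is a faithful outline of the standard trace-formula proof that those references carry out, and you correctly identify the actual content of the statement, namely that the constant is uniform in $\chi$ and that the $\chi$-dependence enters only through the single factor $\dim\chi$, extracted from $|\tr\chi(\gam)|\le\dim\chi$ on the geometric side and from the rank-$(\dim\chi)$ Weyl law on the spectral side.

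One point in your write-up is too casual for the generality in which the lemma is stated. For non-cocompact $\Gam$ of finite covolume (which is exactly the case the paper needs, $\Gam=\sz$ and its congruence subgroups), Weyl's law for the \emph{discrete} spectrum of $\Delta_\chi$ alone, as you state it, is not known in general: what one has is the asymptotic for $N_\chi(T)+M_\chi(T)$, where $M_\chi(T)$ is the winding number of the determinant of the scattering matrix attached to $(\Gam,\chi)$. For the optimization $T=x^{1/4}$ you only need the upper bound $N_\chi(T)\ll_{\Gam}\dim\chi\cdot T^2$, which does follow uniformly from the trace formula, but you must then also bound the continuous-spectrum term $\int_{-T}^{T}(-\varphi_\chi'/\varphi_\chi)(1/2+ir)\,\cdots\,dr$ in the explicit formula uniformly in $\chi$; this is an additional verification, not a consequence of $|\tr\chi(\gam)|\le\dim\chi$, and it is part of what \cite{JK} actually checks. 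With that caveat supplied, your argument is the intended one.
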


Let $\Gam'$ be a normal subgroup of $\Gam$ with $[\Gam:\Gam']<\infty$. 
Denote by $G:=\Gam/\Gam'$ and $\iota:\Gam\to G$ the natural projection. 
According to \cite{Sa1} and \cite{Su}, 
we have the following asymptotic formula for a conjugacy class $[g]$ in $G$.
\begin{align}\label{chebotarev}
\pi_{\Gam}(x;\Gam',[g]):=&\#\{\gam\in\Prim(\Gam)\divset \iota(\gam)\subset [g],N(\gam)<x\}
\sim \frac{\#[g]}{\# G}\li(x).
\end{align}
This can be interpreted as an analogue of the Tchebotarev density theorem 
for algebraic number fields (see, e.g. \cite{Ar}, \cite{Ta} and \cite{Tc}). 
Put 
\begin{align*}
R_{\Gam}(x;\Gam',[g]):=\pi_{\Gam}(x;\Gam',[g])-\frac{\#[g]}{\# G}\li(x).
\end{align*}
We now estimate $R_{\Gam}(x;\Gam',[g])$ as follows. 

\begin{lem}\label{errorchebo}
Let $\Gam$ be a discrete subgroup of $\sr$ with $\vol(\Gam\bsla H)<\infty$ 
and $\Gam'$ a normal subgroup of $\Gam$ with $[\Gam:\Gam']<\infty$. 
Then there exists a constant $B_{\Gam}>0$ depending on $\Gam$ such that 
\begin{align*}
\left|R_{\Gam}(x;\Gam',[g])\right|\leq 
\#[g]B_{\Gam}x^{c_{\Gam,\Gam'}}
\end{align*}
for sufficiently large $x>0$, where 
\begin{align*}
\disp c_{\Gam,\Gam'}:=\max\left\{3/4,s_{j,\chi}\divset \chi \in \hat{G},0<\lambda_{j,\chi}\leq 1/4\right\}.
\end{align*}
\end{lem}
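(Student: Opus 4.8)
The plan is to express the conjugacy-class counting function $\pi_{\Gam}(x;\Gam',[g])$ as a linear combination of the representation-twisted counting functions $\pi_{\Gam,\chi}(x)$ appearing in \eqref{pgt}, and then to import the individual error estimates from Lemma \ref{jk}. The bridge between the two is the orthogonality of characters on the finite group $G=\Gam/\Gam'$. Since $\iota(\gam)$ is a conjugacy class in $G$ and each irreducible character $\chi\in\hat{G}$ is a class function, the indicator of the event $\iota(\gam)\subset[g]$ can be written, via column orthogonality, as
\begin{align*}
\mathbf{1}\left[\iota(\gam)\subset[g]\right]=\frac{\#[g]}{\# G}\sum_{\chi\in\hat{G}}\overline{\tr\chi(g)}\,\tr\chi(\iota(\gam)).
\end{align*}
First I would substitute this identity into the definition of $\pi_{\Gam}(x;\Gam',[g])$ and interchange the finite sum over $\chi$ with the sum over $\gam\in\Prim(\Gam)$ with $N(\gam)<x$. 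Each inner sum is exactly $\pi_{\Gam,\chi}(x)$ for the representation $\chi$ of $\Gam$ obtained by pulling back along $\iota$, so this yields
\begin{align*}
\pi_{\Gam}(x;\Gam',[g])=\frac{\#[g]}{\# G}\sum_{\chi\in\hat{G}}\overline{\tr\chi(g)}\,\pi_{\Gam,\chi}(x).
\end{align*}

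Next I would feed the prime geodesic theorem \eqref{pgt} into each term. The trivial representation $\chi_0$ contributes its full main term $\li(x)$, and because $\overline{\tr\chi_0(g)}=1$, the $\chi_0$-term produces exactly the expected main term $\tfrac{\#[g]}{\# G}\li(x)$ in \eqref{chebotarev}. For the remaining (nontrivial) characters, the small-eigenvalue contributions $\sum_{0\le\lambda_{j,\chi}\le1/4}\li(x^{s_{j,\chi}})$ split into the possible $\lambda_{j,\chi}=0$ piece and the $0<\lambda_{j,\chi}\le1/4$ pieces. The point I would need to verify carefully is that for a \emph{nontrivial} irreducible $\chi$ the eigenvalue $\lambda=0$ does not occur (equivalently, there is no invariant section), so that every surviving small-eigenvalue term satisfies $0<\lambda_{j,\chi}\le1/4$ and hence $s_{j,\chi}<1$; these terms, together with the genuine remainders $R_{\Gam,\chi}(x)$, are precisely what I would absorb into $R_{\Gam}(x;\Gam',[g])$.

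Finally I would bound $R_{\Gam}(x;\Gam',[g])=\tfrac{\#[g]}{\# G}\sum_{\chi\ne\chi_0}\overline{\tr\chi(g)}\left(\sum_{0<\lambda_{j,\chi}\le1/4}\li(x^{s_{j,\chi}})+R_{\Gam,\chi}(x)\right)$ term by term. Using the triangle inequality, $|\tr\chi(g)|\le\dim\chi$, the elementary estimate $\li(x^{s})=O(x^{s})$, and Lemma \ref{jk} to get $|R_{\Gam,\chi}(x)|\le\dim\chi\,A_{\Gam}x^{3/4}$, every summand is $O(\dim\chi\cdot x^{c_{\Gam,\Gam'}})$ with the exponent $c_{\Gam,\Gam'}=\max\{3/4,\,s_{j,\chi}\}$ as defined. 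Since $\sum_{\chi\in\hat{G}}(\dim\chi)^2=\# G$ controls $\sum_\chi\dim\chi$, the prefactors sum to a constant depending only on $G$, and therefore only on $\Gam$ and $\Gam'$; folding this into a single constant $B_{\Gam}$ yields the claimed bound $\#[g]B_{\Gam}x^{c_{\Gam,\Gam'}}$.

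The main obstacle I anticipate is the verification that no nontrivial irreducible character contributes a pole at $s=1$, i.e. that the only representation admitting a zero Laplace eigenvalue is the trivial one. This is what guarantees the error exponent stays strictly below $1$ (apart from the harmless $3/4$ floor) and that the main term is clean; it requires identifying the $\lambda=0$ eigenspace with the space of $\Gam$-invariants of $\chi$, which vanishes for nontrivial irreducibles. The remaining steps are bookkeeping with character orthogonality and the already-available error bound of Lemma \ref{jk}.
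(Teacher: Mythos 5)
Your decomposition is exactly the paper's: the identity
\begin{align*}
\pi_{\Gam}(x;\Gam',[g])=\frac{\#[g]}{\#G}\sum_{\chi\in\hat{G}}\tr\chi(g^{-1})\,\pi_{\Gam,\chi}(x)
\end{align*}
(the paper quotes it from Sunada rather than rederiving it from column orthogonality, but it is the same identity, since $\overline{\tr\chi(g)}=\tr\chi(g^{-1})$ for unitary $\chi$), then the prime geodesic theorem \eqref{pgt} applied termwise, the main term coming from the $\lambda=0$ eigenvalue of the trivial character, and Lemma \ref{jk} for the genuine remainders. The point you flag as the main obstacle --- that $\lambda_{j,\chi}=0$ occurs only for the trivial $\chi$ because the $0$-eigenspace is the space of $\Gam$-invariants of $\chi$ --- is correct and is indeed what makes the main term come out as $\frac{\#[g]}{\#G}\li(x)$.

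There is, however, one genuine gap in your final bookkeeping step. The lemma asserts that $B_{\Gam}$ depends only on $\Gam$ and \emph{not} on $\Gam'$, and this uniformity is used in an essential way later: in the proof of Theorem \ref{thm1} the lemma is applied to the congruence subgroups $\hGam(nmm_1)$ for all $mm_1<T=x^{1/6}$ simultaneously, so a constant depending on the level would invalidate the estimate \eqref{errordelta}. Your argument controls each term $\li(x^{s_{j,\chi}})=O(x^{s_{j,\chi}})$ of the small-eigenvalue sum, but not the \emph{number} of such terms; for that one needs the input the paper cites from Burger, Zograf and Jorgenson--Kramer, namely $\#\{j:0<\lambda_{j,\chi}\leq 1/4\}\leq B'_{\Gam}\dim\chi$ with $B'_{\Gam}$ depending only on $\Gam$. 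With this, $\frac{\#[g]}{\#G}\sum_{\chi}\left|\tr\chi(g^{-1})\right|B'_{\Gam}\dim\chi\leq\#[g]B'_{\Gam}$ because $\sum_{\chi}(\dim\chi)^2=\#G$, and the constant is uniform in $\Gam'$. Without it you only obtain, as you yourself write, a constant ``depending only on $G$, and therefore only on $\Gam$ and $\Gam'$'' --- which is strictly weaker than the statement and not sufficient for its intended application.
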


\begin{proof}
For an element $g\in G$, it holds that 
\begin{align}
\sum_{\chi\in\hat{G}}\tr\chi(g^{-1})\pi_{\Gam,\chi}(x)=\frac{\#G}{\#[g]}\pi_{\Gam}(x;\Gam',[g]),
\end{align}
(see \cite{Su}). 
By virtue of \eqref{pgt}, we have
\begin{align*}
\pi_{\Gam}(x;\Gam',[g])=\frac{\#[g]}{\#G}\sum_{\chi\in\hat{G}}\tr\chi(g^{-1})
\left(\sum_{0\leq \lambda_{j,\chi}\leq 1/4}\li\left(x^{s_{j,\chi}}\right)
+R_{\Gam,\chi}(x)\right).
\end{align*}
Put 
\begin{align*}
R^{(1)}(x):=&\frac{\#[g]}{\#G}\sum_{\chi\in\hat{G}}\tr\chi(g^{-1})
\sum_{0< \lambda_{j,\chi}\leq 1/4}\li\left(x^{s_{j,\chi}}\right),\\
R^{(2)}(x,T):=&\frac{\#[g]}{\#G}\sum_{\chi\in\hat{G}}\tr\chi(g^{-1})R_{\Gam,\chi}(x).
\end{align*}
Due to Lemma \ref{jk}, we get 
\begin{align*}
\left|R^{(2)}(x,T)\right|
\leq & \frac{\#[g]}{\#G}A_{\Gam}x^{3/4}\sum_{\chi\in\hat{G}}\left|\tr\chi(g^{-1})\right|\dim\chi
\leq \#[g]A_{\Gam}x^{3/4}.
\end{align*}
Since $\#\left\{ 0< \lambda_{j,\chi}\leq 1/4\right\}\leq B'_{\Gam}\dim{\chi}$ 
for some constant $B'_{\Gam}>0$ depending on $\Gam$
(\cite{Bur}, \cite{Zo} and \cite{JK}), we also have
\begin{align*}
\left|R^{(1)}(x)\right|\leq &\frac{\#[g]}{\#G}\sum_{\chi\in\hat{G}}\left|\tr\chi(g^{-1})\right|
\sum_{0< \lambda_{j,\chi}\leq 1/4}\li\left(x^{\max(s_{j,\chi})}\right)
\leq \#[g] B'_{\Gam}\li\left(x^{\max(s_{j,\chi})}\right).
\end{align*}
Thus we obtain the estimate in the lemma.
\end{proof}

In Section 4, we will use Lemma \ref{errorchebo} 
in the case that $\Gam=\sz$ and $\Gam'$ is a principal congruence subgroup 
to get the main results. 
For such $\Gam$ and $\Gam'$, it is known that 
$\lambda_{j,\chi}=0$ or $\lambda_{j,\chi}\geq975/4096$ 
(see \cite{Se}, \cite{LRS}, \cite{KS} and \cite{Ki}). 
Then we can take $c_{\sz,\hGam(n)}=3/4$. 

Remark that it has been proved that $R_{\Gam}(x)\ll x^{7/10}$ 
when $\Gam$ is a congruence subgroup of $\sz$ (see \cite{LS}, \cite{LRS} and also \cite{Ko}). 
While it is better than $R_{\Gam}(x)\ll x^{3/4}$, 
we require the estimate with implied constants as described in Lemma \ref{jk} and \ref{errorchebo}. 
Of course, getting the estimate $R_{\Gam}(x)\ll x^{7/10}$ with implied constants 
is not necessarily impossible, and it will give better results if possible. 
However, discussing it is exhaustive and will make this paper too heavy. 
Thus we will apply Lemma \ref{errorchebo} in this paper.

\section{Modular group and binary quadratic forms}

\subsection{Relations between the modular group and quadratic forms}
Let 
\begin{align*}
Q(x,y)=[a,b,c]:=ax^2+bxy+cy^2
\end{align*}
be a binary quadratic form over $\bZ$ with $a,b,c\in\bZ$ and $\gcd(a,b,c)=1$. 
We call that quadratic forms $Q$ and $Q'$ are equivalent 
and write $Q\sim Q'$ if there exists $g\in\sz$ such that $Q(x,y)=Q'\big((x,y).g\big)$. 
Denote by $h(D)$ the number of equivalence classes of the quadratic forms of given $D=b^2-4ac$. 
Let $D=D(Q):=b^2-4ac$ be the discriminant of $[a,b,c]$. 
It is known that, if $D>0$, then there are infinitely many positive solutions $(t,u)$ 
of the Pell equation $t^2-Du^2=4$. 
Put $(t_j,u_j)=(t_j(D),u_j(D))$ the $j$-th positive solution of $t^2-Du^2=4$ 
and $\epsilon_j(D):=(t_j(D)+u_j(D))/2$. 
Note that $\epsilon(D)=\epsilon_1(D)$ is called by the fundamental unit of $D$ in the narrow sense, 
and it holds that $\epsilon_j(D)=\epsilon(D)^j$.

For a quadratic form $Q=[a,b,c]$ and a solution $(t,u)$ of $t^2-Du^2=4$, we put 
\begin{align}\label{1to1}
\gam\big(Q,(t,u)\big):=\begin{pmatrix}\disp\frac{t+bu}{2}&-cu\\ au& \disp\frac{t-bu}{2}\end{pmatrix}\in\slt(\bZ).
\end{align}
Conversely, for $\gam=(\gam_{i,j})_{1\leq i,j\leq 2}\in \sz$, we put
\begin{align}
&t_{\gam}:=\gam_{11}+\gam_{22},\quad u_{\gam}:=\gcd{(\gam_{21},\gam_{11}-\gam_{22},-\gam_{12})},\notag\\
&a_{\gam}:=\gam_{21}/u_{\gam},\quad b_{\gam}:=(\gam_{11}-\gam_{22})/u_{\gam},
\quad c_{\gam}:=-\gam_{12}/u_{\gam}, \label{ttod}\\
&Q_{\gam}:=[a_{\gam},b_{\gam},c_{\gam}],\quad 
D_{\gam}:=\frac{t_{\gam}^2-4}{u_{\gam}^2}=b_{\gam}^2-4a_{\gam}c_{\gam}.\notag
\end{align}
It is known that \eqref{1to1} and \eqref{ttod} gives a one-to-one correspondence between
equivalence classes of primitive binary quadratic forms $D>0$ 
and the elements of $\Prim(\sz)$. 
We now note the following elementary facts without proof (see, e.g. \cite{G}).
\begin{fact}\label{factquad}
Suppose that $\gam,\gam_1,\gam_2\in\sz$ are not in any finite group in $\sz$. Then we have \\
1. $\gam(Q_{\gam},(t_{\gam},u_{\gam}))=\gam$.\\
2. $D(Q)=D_{\gam(Q,(t,u))}$ for any $t,u\geq1$ with $t^2-Du^2=4$, and $D_{\gam}=D(Q_{\gam})$.\\
3. $(t_{\gam(Q,(t,u))},u_{\gam(Q,(t,u))})=(t,u)$, and 
$(t_{\gam},u_{\gam})$ coincides one of $(t_j(D),u_j(D))$.\\
4. If $Q_{\gam_1}=Q_{\gam_2}$ then $Q_{\gam_1\gam_2}=Q_{\gam_1}=Q_{\gam_2}$, 
and $(t_{\gam_1\gam_2},u_{\gam_1\gam_2})=
\left( \frac{1}{2}(t_{\gam_1}t_{\gam_2}+Du_{\gam_1}u_{\gam_2}\right)$, 
$\frac{1}{2}(t_{\gam_1}u_{\gam_2}+t_{\gam_2}u_{\gam_1})\big)$. \\
5. $Q_{g^{-1}\gam g}(x,y)=Q_{\gam}\big((x,y).g\big)$ for any $g\in\sz$.
\end{fact}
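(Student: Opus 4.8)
All five assertions are identities about the two explicit maps \eqref{1to1} and \eqref{ttod}, so the plan is to verify each by direct substitution, treating Fact 5 last as the only one that needs an idea. Throughout, the standing hypothesis that $\gam,\gam_1,\gam_2$ lie in no finite subgroup of $\sz$ guarantees $t_\gam^2>4$ and $u_\gam\neq 0$, so every quantity is well defined and $u_\gam>0$ by the gcd convention. Fact 1 is immediate: inserting $Q_\gam=[a_\gam,b_\gam,c_\gam]$ and $(t_\gam,u_\gam)$ into \eqref{1to1} and using $u_\gam a_\gam=\gam_{21}$, $u_\gam b_\gam=\gam_{11}-\gam_{22}$, $u_\gam c_\gam=-\gam_{12}$, $t_\gam=\gam_{11}+\gam_{22}$, the four entries collapse back to $\gam_{11},\gam_{12},\gam_{21},\gam_{22}$.

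For Facts 2 and 3, I would set $\gam'=\gam(Q,(t,u))$ and read off from \eqref{1to1} that $t_{\gam'}=t$ and, crucially, $u_{\gam'}=\gcd(au,bu,cu)=u\gcd(a,b,c)=u$ because $Q$ is primitive; then $D_{\gam'}=(t^2-4)/u^2=D$ follows from the Pell relation $t^2-Du^2=4$, while $D_\gam=D(Q_\gam)$ is built into the definition of $D_\gam$. The first half of Fact 3 is exactly the computation $t_{\gam'}=t$, $u_{\gam'}=u$ just made; for the second half one notes that $(t_\gam,u_\gam)$ is itself a positive solution of $t^2-D_\gam u^2=4$ (directly from $D_\gam=(t_\gam^2-4)/u_\gam^2$, taking the representative with $t_\gam>0$), hence coincides with one of the $(t_j(D),u_j(D))$.

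Fact 4 is a single $2\times2$ multiplication. Writing $\gam_1,\gam_2$ through Fact 1 with the common primitive form $[a,b,c]$ and $D=b^2-4ac$, one finds that $\gam_1\gam_2$ has lower-left entry $a\cdot\tfrac12(t_1u_2+t_2u_1)$, diagonal difference $b\cdot\tfrac12(t_1u_2+t_2u_1)$, upper-right entry $-c\cdot\tfrac12(t_1u_2+t_2u_1)$, and trace $\tfrac12(t_1t_2+Du_1u_2)$. Primitivity of $[a,b,c]$ then gives $u_{\gam_1\gam_2}=\tfrac12(t_1u_2+t_2u_1)$, whence $a_{\gam_1\gam_2}=a$, $b_{\gam_1\gam_2}=b$, $c_{\gam_1\gam_2}=c$; this proves both $Q_{\gam_1\gam_2}=Q_{\gam_1}=Q_{\gam_2}$ and the stated formula for $(t_{\gam_1\gam_2},u_{\gam_1\gam_2})$ at once.

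The only assertion needing an idea is Fact 5, and the key preliminary step is that $u_\gam$ is a conjugacy invariant. I would prove this by observing that the triple $(\gam_{21},\gam_{11}-\gam_{22},-\gam_{12})$ depends only on the class of $\gam$ in $M_2(\bZ)/\bZ\,\Id\cong\bZ^3$—adding a scalar matrix changes neither the off-diagonal entries nor the diagonal difference—that $u_\gam$ is precisely the content (gcd of coordinates) of that class, and that conjugation by $g\in\sz$ descends to this quotient as the integral adjoint action, an element of $GL_3(\bZ)$, which preserves content; hence $u_{g^{-1}\gam g}=u_\gam$ and $t_{g^{-1}\gam g}=t_\gam$. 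Granting this invariance, both sides of Fact 5 become, after clearing the common factor $u_\gam$, polynomials in $x,y$ and the entries of $g$ and $\gam$ (with $g^{-1}=\mathrm{adj}(g)$ since $\det g=1$), so the claim reduces to the single polynomial identity $\gam'_{21}x^2+(\gam'_{11}-\gam'_{22})xy-\gam'_{12}y^2=\gam_{21}X^2+(\gam_{11}-\gam_{22})XY-\gam_{12}Y^2$, where $\gam'=g^{-1}\gam g$ and $(X,Y)=(x,y)g$, which is verified by expansion. I expect the only real obstacle here to be the bookkeeping in this expansion together with correctly tracking the sign conventions hidden in \eqref{ttod}; the coefficient of $xy$ is the most error-prone, and a single worked instance (e.g.\ a unipotent $g$) is a useful check that the conventions line up.
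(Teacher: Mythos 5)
The paper offers no proof to compare against: it explicitly states these facts ``without proof'' with a pointer to Gauss, so your write-up is not an alternative route but the only argument on the table, and it is essentially correct. The verifications of Facts 1--4 by direct substitution are sound (including the trace and off-diagonal computations in Fact 4, which I checked), and your key idea for Fact 5 --- that $\gam\mapsto(\gam_{21},\gam_{11}-\gam_{22},-\gam_{12})$ identifies $M_2(\bZ)/\bZ\,\Id$ with $\bZ^3$, conjugation descends to an element of $\mathrm{GL}_3(\bZ)$ there, and content is a $\mathrm{GL}_3(\bZ)$-invariant --- is the right one and gives $u_{g^{-1}\gam g}=u_\gam$ cleanly. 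Two small points are worth making explicit. First, in Fact 2 the equality $D_\gam=(t_\gam^2-4)/u_\gam^2=b_\gam^2-4a_\gam c_\gam$ is not purely definitional: it rests on $\det\gam=1$ via $t_\gam^2-4=(\gam_{11}-\gam_{22})^2+4\gam_{12}\gam_{21}$, so one line of justification should be added rather than saying it is built into \eqref{ttod}. Second, your caution about the $xy$-coefficient in Fact 5 is warranted: with the paper's formulas, a direct check with $g=\left(\begin{smallmatrix}1&1\\0&1\end{smallmatrix}\right)$ gives $Q_{g^{-1}\gam g}=[a,\,b-2a,\,a-b+c]$, which equals $Q_\gam$ evaluated at $g^{-1}\binom{x}{y}$ (equivalently at $(x,y)$ times the inverse transpose of $g$), so the unspecified action $(x,y).g$ must be read with exactly that convention for the identity to hold as stated; your suggested unipotent sanity check is therefore not optional but the step that pins the convention down.
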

Since $N(\gam)$ for $\gam\in\Prim(\sz)$ coincides $\epsilon_1(D)^2$ of corresponding 
quadratic form, the prime geodesic theorem for $\sz$ yields the following asymptotic formula. 
\begin{align*}
\pi_{\sz}(x^2)=\sum_{\begin{subarray}{c}D\in\D\\ \epsilon(D)<x\end{subarray}}h(D)
=\li(x^2)+R_{\sz}(x^2),
\end{align*}
where $\D:=\{D>0\divset \text{$D\equiv 0,1\bmod{4}$, not square}\}$.

\subsection{On $\mathrm{PSL}_2(\bZ/n\bZ)$} 

First we prepare some notations. 

For integers $n,m\geq1$, $m\mmid n$ means that 
$m\mid n$ and $\gcd(m,n/m)=1$, 
and $n_m$ denotes the maximal divisor of $n$ relatively prime to $m$. 
For an integer $n\geq1$, let 
\begin{align*}
\bZ_{n}:=&\bZ/n\bZ,\quad \quad 
\bZ_{n}^{(2)}:=\bZ_{n}^*/\left(\bZ_{n}^*\right)^2,\\
\hat{\Gam}(n):=&\Ker\big(\mr{SL}_2(\bZ)\stackrel{\text{proj.}}{\rightarrow}
\mr{PSL}_2(\bZ_n)\big)\\
=&\{\gam\in \mr{SL}_2(\bZ)\divset \gam\equiv \alpha I\bmod{n}, \alpha^2\equiv 1\bmod{n} \}.
\end{align*}
The subgroup $\hGam(n)$ of $\sz$ is called by the principle congruence subgroup of level $n$.
If the factorization of $n$ is $n=\prod_{p}p^r$ then 
\begin{align*}
&\hGam(n)=\bigcap_{p\mid n}\hGam(p^r), \\ 
&v(n):=[\slt(\bZ):\hGam(n)]=\#\mr{PSL}_2(\bZ_n)
=\prod_{p\mid n}\frac{p^{3r-2}(p^2-1)}{\#\bZ_{p^r}^{(2)}}.
\end{align*} 
Note that (see, e.g. \cite{G}) 
\begin{align*}
\#\bZ_{p^r}^{(2)}=
\begin{cases}1, &(p^r=2),\\ 2, &(\text{$p^r=4$ or $2\nmid p$}),\\
4,& (\text{$p=2,r\geq3$}).\end{cases}
\end{align*}

We now prepare the following lemma. 
\begin{lem}\label{cong}
Let $n\geq1$ be an integer and $\gam\in\sz$. 
Then $\gam\in\hGam(n)$ if and only if $n\mid u_{\gam}$.
\end{lem}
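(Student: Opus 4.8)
The plan is to prove both implications directly from the definition of $\hGam(n)$ and the formula $u_\gam=\gcd(\gam_{21},\gam_{11}-\gam_{22},-\gam_{12})$, invoking the determinant relation $\det\gam=1$ only at the single nontrivial point. Throughout I use the elementary fact that for a gcd, $n\mid\gcd(a,b,c)$ holds if and only if $n$ divides each of $a,b,c$, and that the gcd is insensitive to the sign of $\gam_{12}$.

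First I would treat the forward implication. Assume $\gam\in\hGam(n)$, so by definition $\gam\equiv\alpha I\bmod{n}$ for some $\alpha$ with $\alpha^2\equiv1\bmod{n}$. Reading off the entries, this says $\gam_{21}\equiv0$, $\gam_{12}\equiv0$, and $\gam_{11}\equiv\gam_{22}\equiv\alpha\bmod{n}$, so in particular $\gam_{11}-\gam_{22}\equiv0\bmod{n}$. Hence $n$ divides each of the three integers $\gam_{21}$, $\gam_{11}-\gam_{22}$, $-\gam_{12}$, and therefore $n\mid u_\gam$.

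For the converse, assume $n\mid u_\gam$. This forces $n\mid\gam_{21}$, $n\mid\gam_{11}-\gam_{22}$, and $n\mid\gam_{12}$. The first and third congruences give $\gam\equiv\mr{diag}(\gam_{11},\gam_{22})\bmod{n}$, while the second gives $\gam_{11}\equiv\gam_{22}\bmod{n}$; setting $\alpha:=\gam_{11}\bmod{n}$ we obtain $\gam\equiv\alpha I\bmod{n}$. It then remains to verify $\alpha^2\equiv1\bmod{n}$: expanding $\det\gam=\gam_{11}\gam_{22}-\gam_{12}\gam_{21}=1$ modulo $n$ and using $\gam_{12}\equiv\gam_{21}\equiv0\bmod{n}$ yields $\gam_{11}\gam_{22}\equiv1\bmod{n}$, and since $\gam_{11}\equiv\gam_{22}\equiv\alpha$ this gives $\alpha^2\equiv1\bmod{n}$. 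Thus $\gam\in\hGam(n)$.

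I expect no serious obstacle here; the statement is essentially a bookkeeping translation between the congruence description of $\hGam(n)$ and the gcd $u_\gam$. The only place where the argument uses structure rather than merely unwinding definitions is the verification of $\alpha^2\equiv1\bmod{n}$ in the converse, and there the normalization $\det\gam=1$ does all the work, so even that step is immediate.
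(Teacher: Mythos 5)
Your proof is correct: the forward direction is immediate from reading off the entries of $\gam\equiv\alpha I\bmod n$, and the converse correctly uses $\det\gam=1$ to get $\alpha^2\equiv 1\bmod n$, which is exactly what the paper's definition of $\hGam(n)$ requires. The paper itself omits the argument entirely (citing Sarnak for prime $n$ and asserting the composite case is similar), and your direct verification is precisely the routine computation being alluded to, with the added merit of being self-contained and working uniformly for all $n$.
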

The proof for prime $n$ was already done in \cite{Sa1}, 
and its generalization to composite $n$ is similar. \qed

Let $p$ be a prime and $k,r\geq1$ integers. 
Due to \eqref{1to1} and Lemma \ref{cong}, 
we can express the element $\gam\in\hGam(p^{k})-\hGam(p^{k+1})$ in $\psl(\bZ_{p^{r+k}})$ by 
\begin{align}\label{modp}
\gam\equiv \begin{pmatrix}\disp\frac{T+Bp^k}{2} & Ap^k \\ -Cp^k & \disp\frac{T-Bp^k}{2}\end{pmatrix}
\bmod{p^{r+k}},
\end{align}
where $A\equiv a_{\gam}(u_{\gam}/p^k)$, $B\equiv b_{\gam}(u_{\gam}/p^k)$, 
$C\equiv c_{\gam}(u_{\gam}/p^k) \bmod{p^r}$ and $T\equiv t_{\gam}\bmod{p^{r+k}}$. 
Note that $p^k\mmid u_{\gam}$ and $T$ satisfies 
\begin{align}
T^2\equiv 4+(B^2-4AC)p^{2k}
\bmod{\begin{cases}2^{r+k+2}&(p=2),\\ p^{r+k}&(p\geq3).\end{cases}}\label{tsquare}
\end{align}
Let $u_{n}:=(u_{\gam})_n$ and $D(n)=(D_{\gam})(n):=u_{n}^2D_{\gam}$. 
We see that $D(p)=B^2-4AC$. 

We now prove the following proposition as a preparation for the main theorem.

\begin{prop}\label{count}
Let $r,k$ be integers, $p$ a prime number and $\delta\in \bZ_{p^r}$. 
Put 
\begin{align*}
&\cT(\delta;p^r,k):=\#\left\{T\in\bZ_{p^{r+k}}\Bigdivset T^2\equiv 4+\delta p^{2k}
\bmod{\begin{cases}2^{r+k+2}&(p=2)\\ p^{r+k}&(p\geq3)\end{cases}}\right\},\\
&\cA(\delta;p^r):=\#\left\{(A,B,C)\in\bZ_{p^{r}}^3\bigdivset p\nmid \gcd(A,B,C), 
B^2-4AC\equiv \delta\bmod{p^r}\right\},\\
&\hGam(\delta;p^r,k):=
\left\{\gam\in\psl(\bZ_{p^{r+k}}) \Bigdivset D(p)\equiv \delta\bmod p^r\right\}.
\end{align*}
Then it holds that 
\begin{align*}
\#\hGam(\delta;p^r,k)=\cT(\delta;p^r,k) \cA(\delta;p^r)/\#\bZ_{p^r}^{(2)}.
\end{align*}
Furthermore, $\cT(\delta;p^r,k)$ and $\cA(\delta;p^r)$ are given as follows.
\begin{align*}
\cT(\delta;2^r,k)=&\begin{cases}4,&(32\mid 2^{2k}\delta),\\
2^{l+1},&(k=0,\delta\equiv-4+2^{2l}\alpha^2\bmod {2^{r+2}},0\leq l\leq r/2),\\
2^{[r/2]},&(k=0, \delta\equiv-4\bmod{2^{r+2}},\\
&\text{or $k=0, \delta\equiv-4+2^{r+1}\bmod{2^{r+2}}$, $r$ is odd}),\\
0,&(\text{otherwise}),\end{cases}\\
\cT(\delta;p^r,k)=&\begin{cases}2,&(p\mid \delta p^{2k}),\\
2p^l,&(k=0,\delta\equiv-4+\alpha^2 p^{2l}\bmod p^{r},0\leq l< r/2),\\
p^{[r/2]},&(k=0,\delta\equiv -4\bmod{p^r}),\\
0,&(\text{otherwise}),\end{cases},\\
\cA(\delta;2^{r+2})=&\begin{cases}3\times 2^{2r-2},&\left(\delta\equiv 1\bmod{8}\right),\\
2^{2r-2},&\left(\delta\equiv 5\bmod{8}\right),\\
3\times 2^{2r-3},&(4 \mid \delta),\\
0,&(\text{otherwise}),\end{cases}\\
\cA(\delta;p^r)=&\begin{cases}p^{2r-1}(p+1),&\left((\delta/p)=1\right),\\
p^{2r-1}(p-1),&\left((\delta/p)=-1\right),\\
p^{2r-2}(p^2-1),&(p \mid \delta),\end{cases}
\end{align*}
where $p$ is an odd prime and $\alpha\in \bZ_{2^{r+2}}^*$ or $\bZ_{p^r}^{*}$.
\end{prop}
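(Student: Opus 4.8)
The plan is to read off $\#\hGam(\delta;p^r,k)$ directly from the parametrisation \eqref{modp}, and then to reduce each of $\cT(\delta;p^r,k)$ and $\cA(\delta;p^r)$ to an elementary counting problem modulo a prime power. By \eqref{modp} and Lemma \ref{cong}, every $\gam\in\hGam(\delta;p^r,k)$ is represented in $\psl(\bZ_{p^{r+k}})$ by a matrix determined by a quadruple $(T,A,B,C)$, where $(A,B,C)\in\bZ_{p^r}^3$ is primitive (i.e. $p\nmid\gcd(A,B,C)$, reflecting $p^k\mmid u_\gam$), where $T\equiv t_\gam$ is the trace, and where $B^2-4AC\equiv\delta\bmod p^r$. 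The decisive observation is that the compatibility condition \eqref{tsquare} reads $T^2\equiv 4+\delta p^{2k}$ once $\delta=B^2-4AC$ is fixed, so that the admissible $T$ and the admissible $(A,B,C)$ may be counted independently: the former is exactly $\cT(\delta;p^r,k)$ and the latter exactly $\cA(\delta;p^r)$. Finally, passing from $\slt$-data to the quotient $\psl(\bZ_{p^{r+k}})=\slt(\bZ_{p^{r+k}})/\{\alpha I:\alpha^2\equiv1\}$ identifies quadruples differing by the scalar action $(T,A,B,C)\mapsto(\alpha T,\alpha A,\alpha B,\alpha C)$, which fixes $\delta$; verifying that this action is free on the relevant matrices and that the resulting multiplicity is $\#\bZ_{p^r}^{(2)}$ yields the identity $\#\hGam(\delta;p^r,k)=\cT(\delta;p^r,k)\,\cA(\delta;p^r)/\#\bZ_{p^r}^{(2)}$.

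Next I would evaluate $\cT(\delta;p^r,k)$, which is simply the number of square roots of $4+\delta p^{2k}$ modulo $p^{r+k}$ (modulo $2^{r+k+2}$ when $p=2$). For odd $p$ this is the standard square-root count: writing $4+\delta p^{2k}=p^{2l}u$ with $u$ a unit, there are $2p^{l}$ roots when $2l<r+k$ and $u$ is a square, a $p^{[(r+k)/2]}$-type count when $4+\delta p^{2k}\equiv0$, and none otherwise; in particular $k\geq1$ forces $4+\delta p^{2k}\equiv4$, a square unit, whence $\cT=2$. For $p=2$ one invokes the description of squares modulo $2^m$ (a unit is a square precisely when it is $\equiv1\bmod 8$, and then has four square roots), which is exactly what produces the $\bmod 8$ distinctions and explains the shifted modulus $2^{r+k+2}$.

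Then I would evaluate $\cA(\delta;p^r)$, the number of primitive representations of $\delta$ by the ternary form $B^2-4AC$ modulo $p^r$. For odd $p$ the form is nondegenerate, so I would first count over $\bZ_p$ by fixing $B$ and counting solutions of $4AC\equiv B^2-\delta$; this gives $p(p+1)$, $p(p-1)$ or $p^2$ according as $(\delta/p)=1,-1$ or $p\mid\delta$, the last corrected to $p^2-1$ by discarding the unique imprimitive triple. Smoothness of the quadric away from the origin then lifts each residue solution with multiplicity $p^{2(r-1)}$ in the unit cases, giving $p^{2r-1}(p\pm1)$; the case $p\mid\delta$ requires a short stratification (or a recursion in $r$) to reach $p^{2r-2}(p^2-1)$. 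The prime $p=2$ is treated separately, modulo $2^{r+2}$, by the same fibering over $B$ together with the $2$-adic square classes, producing the $\delta\bmod 8$ cases.

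The routine parts are the square-root and quadric counts for odd $p$. The main obstacle is the prime $2$ throughout: both the square-root count (with its modulus $2^{r+k+2}$ and the $\bmod 8$ square-class analysis) and the representation count modulo $2^{r+2}$ demand careful case work, and the factorisation step must be normalised so that the central multiplicity collapses from $\#\bZ_{p^{r+k}}^{(2)}$ to the stated $\#\bZ_{p^r}^{(2)}$.
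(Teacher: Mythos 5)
Your proposal follows the same route as the paper: factor $\#\hGam(\delta;p^r,k)$ into the independent counts $\cT(\delta;p^r,k)$ and $\cA(\delta;p^r)$ divided by the central multiplicity, evaluate $\cT$ as a square-root count modulo a prime power (with the shifted modulus and square-class analysis at $p=2$), and evaluate $\cA$ by fibering over $B$ and solving $4AC\equiv B^2-\delta$, treating $p=2$ by separate casework on $\delta\bmod 8$. The only genuine difference is that for odd $p$ you lift the mod-$p$ count of the quadric via smoothness and Hensel's lemma with multiplicity $p^{2(r-1)}$, which in the case $(\delta/p)=1$ actually streamlines the paper's more laborious stratification over the exact power $p^l\mmid \delta-B^2$; both yield $p^{2r-1}(p+1)$.
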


\begin{proof}
We first note that $\cT(\delta;p^r,k)$ does not depend on $(A,B,C)$ but $\delta$. 
Then we have 
\begin{align*}
\#\hGam(\delta;p^r,k)=\cT(\delta;p^r,k) \cA(\delta;p^r)/\#\bZ_{p^r}^{(2)}.
\end{align*}

Next, compute $\cT(\delta;p^r,k)$. 
The equation is 
\begin{align*}
T^2\equiv 4+\delta p^{2k}\bmod{\begin{cases}2^{r+k+2}&(p=2)\\ p^{r+k}&(p\geq3)\end{cases}}.
\end{align*} 
It is easy to see that this has solutions if and only if $4+\delta p^{2k}\equiv p^{2l}\alpha^2$ or $\equiv 0$ 
for some $l\geq0$ and $\alpha\in\bZ_{p^r}^{*}$. 
The solutions of $T^2\equiv \alpha^2p^{2l}\bmod{p^r}$ are written in the form 
$T\equiv\pm\alpha p^{l}+\beta p^{r-l}$ for $\beta\in\bZ_{p^l}$ when $p\geq3$, 
and are $T\equiv\omega\alpha p^l+\beta 2^{r-l-1}$ for $\omega\in\bZ_{p^r}^{(2)}$ 
and $\beta\in\bZ_{p^{l+1}}$ when $p=2$. 
On the other hand, the solutions of $T^2\equiv 0\bmod{p^r}$ are in the form $T\equiv \beta p^l$ with $2l\geq r$. 
Thus we can get $\cT(\delta;p^r,k)$ as described in the proposition.

Finally, we compute $\cA(\delta;p^r)$. 

\noi{\bf The case of $\delta\equiv 5\bmod{8}$, $p=2$.} 
In this case, $B$ must be odd and the number of such $B$ is $2^{r-1}$.
Since $(\delta-B^2)/4$ is odd, $C$ is given by 
$C\equiv A^{-1}\frac{\delta-B^2}{4}$ for given $A\in\bZ_{2^r}^{*}$ and $B$. 
Thus we have $\cA(\delta;2^{r+2})=2^{r-1}\times 2^{r-1}=2^{2r-2}$.

\noi{\bf The case of $4\mid\delta$, $p=2$.}
When $4\mid\delta$, $B$ must be even. 
If $(\delta-B^2)/4\not\equiv 0\bmod{2}$, then $C$ is determined by 
$C\equiv A^{-1}\frac{\delta-B^2}{4}$ for given $A\in\bZ_{2^r}^{*}$ and $B$. 
Then the number of such $(A,B,C)$ is $2^{r-2}\times 2^{r-1}=2^{2r-3}$. 
If $(\delta-B^2)/4\equiv 0\bmod{2}$, then $2\mid AC$. 
Since $2\nmid \gcd(A,B,C)$, one of $A,C$ is odd. 
This means that the number of such $(A,B,C)$ is $2^{r-2}\times 2\times 2^{r-1}=2^{2r-2}$. 
Thus we have 
$\cA(\delta;2^{r+2})=2^{2r-3}+2^{2r-2}=3\times2^{2r-3}$.

\noi{\bf The case of $\delta\equiv 1\bmod{8}$, $p=2$.} 
In this case, $B$ must be odd and $AC$ even. 
Putting $B=1+2B_1$ and $\delta=1+8\delta_1$ into the equation $B^2-4AC\equiv \delta\bmod{2^{r+2}}$, 
we have
\begin{align*}
B_1^2+B_1\equiv \delta_1+\frac{AC}{2}\bmod{2^{r-1}}.
\end{align*} 
If the right hand side of the above is even, 
then there are two $B_1\in\bZ_{2^{r-1}}$, 
and if it is odd then there are no such $B_1$. 
It is easy to see that the number of $(A,C)$ with $2\mid AC$ 
and $2\mid \frac{AC}{2}-\delta_1$ is $3\times 2^{2r-3}$. 
Thus we have $\cA(\delta;2^{r+2})=3\times 2^{2r-3}\times 2=3\times 2^{2r-2}$.

\noi{\bf The case of $(\delta/p)=-1$, $p\geq3$.} 
When $(\delta/p)=-1$, we have $\delta-B^2=4AC\not\equiv0\bmod{p}$ for any $B\in\bZ_{p^r}$. 
Then $C$ is uniquely determined by $C\equiv(4A)^{-1}(\delta-B^2)$ for given $B\in\bZ_{p^r}$ 
and $A\in \bZ_{p^r}^{*}$. 
Thus we have $\cA(\delta;p^r)=p^r\times p^{r-1}(p-1)=p^{2r-1}(p-1)$.

\noi{\bf The case of $p\mid\delta$, $p\geq3$.}
When $p\nmid B$, $C$ is uniquely determined by $C\equiv (4A)^{-1}(\delta-B^2)\bmod{p^r}$ 
for given $A\in\bZ_{p^r}^{*}$. 
Then the number of such $(A,B,C)$ is $p^{r-1}(p-1)\times p^{r-1}(p-1)=p^{2r-2}(p-1)^2$.
When $p\mid B$, we have $p\mid AC$. 
Since $p\nmid\gcd(A,B,C)$, one of $A$, $C$ is not divided by $p$. 
Then the number of such $(A,B,C)$ is $p^{r-1}\times 2\times p^{r-1}(p-1)=2p^{2r-2}(p-1)$.
Thus we have $\cA(\delta;p^r)=p^{2r-2}(p-1)^2+2p^{2r-2}(p-1)=p^{2r-2}(p^2-1)$.

\noi{\bf The case of $(\delta/p)=1$, $p\geq3$.} 
When $(\delta/p)=1$, there are $B\in\bZ_{p^r}$ such that $p\mid \delta-B^2$. 
Since the number of $B\in\bZ_{p^r}$ with $B^2\equiv \delta\bmod{p^l}$ ($1\leq l\leq r$) 
is $2p^{r-l}$, 
the number of $B\in\bZ_{p^r}$ with $p^l\mmid \delta-B^2$ is $p^{r-1}(p-2)$ for $l=0$, 
$2p^{r-l-1}(p-1)$ for $1\leq l\leq r-1$ and $2$ for $l=r$. 

When $p\nmid \delta-B^2$, $C$ is uniquely determined by $C\equiv (4A)^{-1}(\delta-B^2)$ 
for given $A\in\bZ_{p^r}^{*}$. 
Then the number of such $(A,C)$ is $p^{r-1}(p-1)$. 
When $\delta-B^2\equiv \alpha p^l$ for $1\leq l\leq r-1$ and $\alpha\in\bZ_{p^{r-l}}^{*}$, 
$A,C$ are given by $A\equiv a_1p^{l_1}$ and $C\equiv c_1p^{l-l_1}$ for $0\leq l_1\leq l$, 
$a_1\in\bZ_{p^{r-l}}^{*}$ and $c_1\in\bZ_{p^{r+l_1-l}}^{*}$. 
Since $a_1c_1\equiv \alpha\bmod{p^{r-l}}$, 
we see that the number of such $(a_1,c_1)$ is $p^{r-1}(p-1)$ for given $l$ and $l_1$.
When $p^r\mid \delta-B^2$, $A$ and $C$ are given by 
$A\equiv a_1p^{l_1}$ and $C\equiv c_1p^{r-l_1}$ for $0\leq l_1\leq r$, 
$a_1\in\bZ_{p^{r-l_1}}^{*}$ and $c_1\in\bZ_{p^{l_1}}$. 
Then the number of such $(A,C)$ is 
\begin{align*}
\sum_{0\leq l_1\leq r-1}p^{r-l_1-1}(p-1)\times p^{l_1}+p^r=(r+1)p^r-rp^{r-1}.
\end{align*}
Thus we get 
\begin{align*}
\cA(\delta;p^r)=&p^{r-1}(p-2) p^{r-1}(p-1)
+\sum_{1\leq l\leq r-1}2p^{r-l-1}(p-1) (l+1) p^{r-1}(p-1)\\
&+2\left((r+1)p^r-rp^{r-1}\right)\\=&p^{2r-1}(p+1).
\end{align*}

\end{proof}

\section{Main results}

Let $\cC$ be a ``condition" for $D\in\D$ and 
$\D(\cC)$ the set of $D\in\D$ satisfying the condition $\cC$. 
For example, $\D(D\equiv 1\bmod{4})$ is the set of $D\in\D$ with $D\equiv 0,1\bmod{4}$. 
For a condition $\cC$, denote by 
\begin{align*}
\pi(x;\cC):=&\sum_{\begin{subarray}{c}D\in\D(\cC)\\ \epsilon(D)<x\end{subarray}}h(D).
\end{align*}
If $\pi(x,\cC)/\li(x^2)$ converges as $x\tinf$, put 
\begin{align*}
\eta(\cC):=&\lim_{x\tinf}\frac{\pi(x,\cC)}{\li(x^2)},\\ 
R(x;\cC):=&\pi(x,\cC)-\eta(\cC)\li(x^2).
\end{align*}
The aim in this section is to study $\pi(x;\cC)$ 
when $\cC$ is given by the arithmetic progressions. 

\subsection{Lemmas}

\begin{lem}\label{lem1}
Let $n_1,n_2\geq1$ be integers and $\delta\in\bZ_{n_1}$. 
Then $\eta(D_{n_1}\equiv\delta\bmod{n_1},n_2\mid u)$ exists and 
\begin{align*}
&\left|R(x;D(n_1)\equiv\delta\bmod{n_1},n_2\mid u)\right|\\
&\leq \eta(D(n_1)\equiv\delta\bmod{n_1},n_2\mid u)v(n_1n_2)B_{\sz}x^{3/2}.
\end{align*}
\end{lem}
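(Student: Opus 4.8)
The plan is to translate the class-number sum into a count of primitive hyperbolic classes and then to feed it into the Tchebotarev-type estimate of Lemma~\ref{errorchebo} at the congruence level $N:=n_1n_2$. By the correspondence \eqref{1to1}--\eqref{ttod} together with the identity $N(\gam)=\epsilon(D_\gam)^2$, the sum $\pi(x;\cC)$ for the condition $\cC=\{D(n_1)\equiv\delta\bmod n_1,\ n_2\mid u\}$ equals
\begin{align*}
\pi(x;\cC)=\#\left\{\gam\in\Prim(\sz)\divset \text{$\cC$ holds for $\gam$},\ N(\gam)<x^2\right\},
\end{align*}
and by Fact~\ref{factquad} the data $u_\gam$ and $D_\gam$ (hence $u_{n_1}$ and $D(n_1)=u_{n_1}^2D_\gam$) are conjugacy-class invariants, so $\cC$ is a conjugation-invariant condition.

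First I would exhibit $\cC$ as membership of $\iota(\gam)$ in a union $S=S(\delta;n_1,n_2)$ of conjugacy classes of $G:=\sz/\hGam(N)\cong\psl(\bZ_N)$. The divisibility part is Lemma~\ref{cong}: $n_2\mid u_\gam$ is exactly $\gam\in\hGam(n_2)$, a condition on $\gam\bmod n_2$ that is conjugation invariant because $\hGam(n_2)$ is normal. For the congruence part I would use the residue expansion \eqref{modp}--\eqref{tsquare} and Proposition~\ref{count}, which read off $D(n_1)\bmod n_1$ from the local triples $(A,B,C)$ attached to $\gam$ and so turn $D(n_1)\equiv\delta$ into finitely many residue conditions. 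Granting that $S$ is a union of conjugacy classes, one has
\begin{align*}
\pi(x;\cC)=\sum_{[g]\subset S}\pi_{\sz}\!\left(x^2;\hGam(N),[g]\right).
\end{align*}

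The rest is bookkeeping. Applying Lemma~\ref{errorchebo} with $\Gam=\sz$, $\Gam'=\hGam(N)$, where $c_{\sz,\hGam(N)}=3/4$ so that $(x^2)^{3/4}=x^{3/2}$, each term splits as $\frac{\#[g]}{\#G}\li(x^2)+R_{\sz}(x^2;\hGam(N),[g])$ with $|R_{\sz}(x^2;\hGam(N),[g])|\leq\#[g]\,B_{\sz}x^{3/2}$. Summing over $[g]\subset S$ shows that
\begin{align*}
\eta(\cC)=\lim_{x\tinf}\frac{\pi(x;\cC)}{\li(x^2)}=\frac{\#S}{\#G}
\end{align*}
exists, and since $\#G=v(N)=v(n_1n_2)$ we have $\#S=\eta(\cC)v(n_1n_2)$. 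Hence
\begin{align*}
|R(x;\cC)|\leq\sum_{[g]\subset S}\#[g]\,B_{\sz}x^{3/2}=\#S\,B_{\sz}x^{3/2}=\eta(\cC)\,v(n_1n_2)\,B_{\sz}x^{3/2},
\end{align*}
which is the assertion.

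\textbf{The main obstacle} is exactly the step I glossed: showing that $\cC$ is captured at the finite level $N=n_1n_2$. The quantity $u_{n_1}$, being the prime-to-$n_1$ part of $u_\gam$, depends on the full factorization of $u_\gam$ and not on $u_\gam\bmod N$; concretely, if $p\mid n_1$ and $p^{k}\mmid u_\gam$ with $k$ large then $D(n_1)\equiv(t_\gam^2-4)/p^{2k}\bmod p$ is only visible modulo $p^{2k+1}$, i.e. invisible at level $N$ once $\iota(\gam)$ is central. I would handle this by stratifying $\Prim(\sz)$ according to the valuations $\ord_p(u_\gam)$ for $p\mid n_1$, describing each stratum by conjugacy classes at the higher level $\prod_{p\mid n_1}p^{2\ord_p(u_\gam)}\cdot N$ via \eqref{modp}--\eqref{tsquare} and Proposition~\ref{count}, applying Lemma~\ref{errorchebo} on each, and then summing. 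The genuine difficulty is to check that these strata recombine so that the densities sum to $\#S/v(n_1n_2)$ and, above all, that the accumulated error still collapses to the single clean bound $\eta(\cC)v(n_1n_2)B_{\sz}x^{3/2}$ rather than growing with the number of strata; here one uses that $N(\gam)<x^2$ forces $u_\gam\ll x$, so that only finitely many strata are nonempty. Verifying this recombination, not the final arithmetic, is the crux.
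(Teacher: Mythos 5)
Your main argument is exactly the paper's proof: translate the class-number sum into a count over $\Prim(\sz)$, identify the condition $\cC$ with a union $S$ of conjugacy classes of $\psl(\bZ_{n_1n_2})$ via Fact~\ref{factquad} and Lemma~\ref{cong}, apply Lemma~\ref{errorchebo} with $\Gam'=\hGam(n_1n_2)$ and $c_{\sz,\hGam(n_1n_2)}=3/4$, and sum so that the error is bounded by $\sum_{[g]\subset S}\#[g]\,B_{\sz}x^{3/2}=\eta(\cC)v(n_1n_2)B_{\sz}x^{3/2}$. The obstacle you flag at the end --- that $D(n_1)\bmod n_1$ ceases to be visible at level $n_1n_2$ once some $p\mid n_1$ divides $u_{\gam}$ to high order --- is genuine, but the paper's proof does not address it either: it simply asserts the correspondence ``due to Fact~\ref{factquad}'', and the stratification over $\ord_p(u_{\gam})$ that you sketch is precisely what the paper carries out downstream (the $p^k\mmid u$ decomposition in Proposition~\ref{proppr}, with its $\sum_{k\le \log_p x}$ error bound, and the $u=m$ decomposition in Theorem~\ref{thm1}) rather than inside this lemma, where the conditions actually fed in always pin down the exact valuations at the primes of $n_1$.
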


\begin{proof} 
Due to Fact \ref{factquad}, we see that the element 
\begin{align*}
\gam\equiv\begin{pmatrix}\disp\frac{t+Bn_2}{2}&An_2\\-Cn_2&\disp\frac{t-Bn_2}{2}\end{pmatrix}
\bmod{n_1n_2}
\end{align*}
with $\delta\equiv B^2-4AC\bmod{n_1}$ and $n_1\nmid \gcd(A,B,C)$ corresponds 
to the quadratic form with $D(n_1)\equiv\delta\bmod{n_1}$ and $n_2 \mid u$. 
Then, according to Lemma \ref{errorchebo}, we have
\begin{align*}
&\pi(x;D(n_1)\equiv\delta\bmod{n_1}, n_2\mid u)\\
&=\sum_{\begin{subarray}{c}[g]\in\Conj(\sz)\\ (D_g)(n_1)\equiv\delta\bmod{n_1},
n_2\mid u_g\end{subarray}}\#[g]\pi_{\sz}\left(x;\hGam(n_1n_2),[g]\right).
\end{align*}
Thus we get
\begin{align*}
&\eta(D(n_1)\equiv\delta\bmod{n_1},n_2\mid u)=\sum_{[g]}\frac{\#[g]}{v(n_1n_2)}, \\
&\left|R(x;D(n_1)\equiv\delta\bmod{n_1},n_2\mid u)\right|=\left|\sum_{[g]}R_{\sz}\left(x^2;\hGam(n_1n_2),[g]\right)\right|
\leq \sum_{[g]}\#[g]B_{\sz}x^{3/2},
\end{align*}
and the lemma follows immediately.
\end{proof}

When $n_1,n_2$ are powers of the same prime $p$, 
we can easily find the following lemma.

\begin{lem}\label{lempr}
Let $p$ be a prime number, $k\geq0,r\geq1$ integers and $\delta \in \bZ_{p^r}$. 
Then we have
\begin{align*}
\eta\left(D(p)\equiv\delta\bmod{p^r}, p^k\mmid u\right)
=\frac{\#\hGam(\delta,p^r,k)}{v(p^{r+k})},
\end{align*}
where $\#\hGam(\delta,p^r,k)$ is explicitly calculated in Proposition \ref{count}.
\end{lem}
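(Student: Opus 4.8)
The plan is to specialize the Tchebotarev-type density computation behind Lemma \ref{lem1} to the case $n_1=p^r$, $n_2=p^k$, taking $G=\psl(\bZ_{p^{r+k}})$ so that $\#G=v(p^{r+k})$. Exactly as in the proof of Lemma \ref{lem1}, the asymptotic \eqref{chebotarev} shows that each $\gam\in\Prim(\sz)$ whose image in $G$ lands in a fixed conjugacy class $[g]$ contributes density $\#[g]/v(p^{r+k})$ to $\pi(x;\cC)/\li(x^2)$. Thus the whole task reduces to identifying, among the conjugacy classes of $G$, precisely those whose representatives satisfy $D(p)\equiv\delta\bmod{p^r}$ together with the exact-power condition $p^k\mmid u$, and then to summing their cardinalities.

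First I would pin down which elements of $G$ carry the exact divisibility $p^k\mmid u$ rather than mere divisibility $p^k\mid u$. By Lemma \ref{cong}, $\gam\in\hGam(p^k)$ iff $p^k\mid u_{\gam}$, so $\gam\in\hGam(p^k)-\hGam(p^{k+1})$ precisely when $p^k\mmid u_{\gam}$. Reducing such a $\gam$ modulo $p^{r+k}$ and writing it in the shape \eqref{modp}, the coefficients $A,B,C$ are defined modulo $p^r$ and the off-scalar entries are $Ap^k$, $Bp^k$, $-Cp^k$; the exact-power condition $p^k\mmid u_{\gam}$ then translates into $p\nmid\gcd(A,B,C)$, since $p^{k+1}\mid u_{\gam}$ would force $p$ to divide all three. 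As $D(p)=B^2-4AC$ in this parametrization, the set of $g\in G$ satisfying both conditions is exactly the set $\hGam(\delta;p^r,k)$ whose size is evaluated in Proposition \ref{count}.

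It then remains to replace the weighted sum $\sum_{[g]\subset\hGam(\delta;p^r,k)}\#[g]$ by the plain count $\#\hGam(\delta;p^r,k)$, which is legitimate once the defining condition is conjugation-invariant. The trace $t_{\gam}=\gam_{11}+\gam_{22}$ is visibly invariant under conjugation, and by Fact \ref{factquad} the discriminant $D_{\gam}$ equals the discriminant of the quadratic form $Q_{\gam}$, an $\sz$-equivalence invariant; hence $u_{\gam}=\big((t_{\gam}^2-4)/D_{\gam}\big)^{1/2}$, the quantity $D(p)=u_p^2 D_{\gam}$, and the condition $p^k\mmid u$ are all conjugation-invariant. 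Consequently $\hGam(\delta;p^r,k)$ is a union of conjugacy classes of $G$, so $\sum_{[g]\subset\hGam(\delta;p^r,k)}\#[g]=\#\hGam(\delta;p^r,k)$, and dividing by $v(p^{r+k})$ gives the asserted formula.

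The main obstacle I expect is the bookkeeping distinguishing the exact divisibility $p^k\mmid u$ from plain divisibility $p^k\mid u$: one must verify that this distinction survives reduction modulo $p^{r+k}$ and is faithfully encoded by the single condition $p\nmid\gcd(A,B,C)$ in \eqref{modp}, rather than demanding information modulo a higher power of $p$. Once that identification is secured, together with the conjugation-invariance of the pair $(D(p),u)$, the conclusion follows directly from \eqref{chebotarev} and the count in Proposition \ref{count}.
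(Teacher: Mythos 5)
Your proposal is correct and follows exactly the route the paper intends: the paper gives no written proof of this lemma (it is stated as an easy consequence of Lemma \ref{lem1}), and your argument simply specializes that lemma to $n_1=p^r$, $n_2=p^k$, identifies the relevant union of conjugacy classes of $\psl(\bZ_{p^{r+k}})$ with the set $\hGam(\delta;p^r,k)$ via the parametrization \eqref{modp} (with $p\nmid\gcd(A,B,C)$ encoding $p^k\mmid u$), and uses conjugation-invariance of $(t_\gam,D_\gam)$ to sum the class sizes. Nothing further is needed.
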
 

For composite $n_1$, we give the following lemmas.
\begin{lem}\label{modpq}
Let $n_1,n_2\geq1$ be integers relatively prime to each other 
and fix $[g]$ a conjugacy class in $\psl(\bZ_{n_1n_2})$. 
Then we have
\begin{align*}
\#[g]=&\#[g]_{n_1} \#[g]_{n_2},\\
\end{align*}
where $[g]_{n_1},[g]_{n_2}$ are the sets of $g_1\in\psl(\bZ_{n_1})$ 
with $g_1\equiv g\bmod{n_1}$ and 
$g_2\in\psl(\bZ_{n_2})$ with $g_2\equiv g\bmod{n_2}$ for some $g\in[g]$ respectively.
\end{lem}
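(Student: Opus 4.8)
The plan is to reduce everything to the Chinese Remainder Theorem together with the elementary fact that conjugacy classes multiply in a direct product. First I would note that, since $\gcd(n_1,n_2)=1$, the ring isomorphism $\bZ_{n_1n_2}\cong\bZ_{n_1}\ti\bZ_{n_2}$ induces a group isomorphism
\begin{align*}
\slt(\bZ_{n_1n_2})\cong\slt(\bZ_{n_1})\ti\slt(\bZ_{n_2}),
\end{align*}
given entrywise by reduction modulo $n_1$ and $n_2$. To pass from $\slt$ to $\psl$, recall that $\psl(\bZ_n)=\slt(\bZ_n)/Z_n$ with $Z_n:=\{\alpha I:\alpha^2\equiv1\bmod{n}\}$. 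Under the isomorphism above $\alpha I\mapsto(\alpha I,\alpha I)$, and since $\alpha^2\equiv1\bmod{n_1n_2}$ holds if and only if $\alpha^2\equiv1$ modulo each of $n_1,n_2$, one gets $Z_{n_1n_2}\cong Z_{n_1}\ti Z_{n_2}$. Taking quotients then yields
\begin{align*}
\psl(\bZ_{n_1n_2})\cong\psl(\bZ_{n_1})\ti\psl(\bZ_{n_2}),
\end{align*}
under which a given $g$ corresponds to the pair $(g_1,g_2)$ with $g_i\equiv g\bmod{n_i}$.

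Next I would use that in a direct product $G_1\ti G_2$ the conjugacy class of $(g_1,g_2)$ is exactly $[g_1]\ti[g_2]$, because $(h_1,h_2)(g_1,g_2)(h_1,h_2)^{-1}=(h_1g_1h_1^{-1},\,h_2g_2h_2^{-1})$ ranges independently over the two factors. Hence under the isomorphism the class $[g]$ corresponds to $[g_1]\ti[g_2]$, so that
\begin{align*}
\#[g]=\#[g_1]\,\#[g_2].
\end{align*}

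It then remains to identify the sets $[g]_{n_i}$ of the statement with the individual classes $[g_i]$. By definition $[g]_{n_1}$ is the image of $[g]$ under the projection $\psl(\bZ_{n_1n_2})\to\psl(\bZ_{n_1})$, which under the isomorphism is the first coordinate projection. Since the image of $[g_1]\ti[g_2]$ under this projection is all of $[g_1]$ (the second factor being nonempty), we get $[g]_{n_1}=[g_1]$ and likewise $[g]_{n_2}=[g_2]$. Combining this with the previous display gives $\#[g]=\#[g]_{n_1}\,\#[g]_{n_2}$, as claimed.

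The only genuinely delicate point is the descent of the Chinese Remainder isomorphism from $\slt$ to $\psl$: one must verify that the scalar subgroups $Z_n$ match up under the CRT decomposition, not merely the ambient groups $\slt(\bZ_n)$, which is exactly where the characterization $\alpha^2\equiv1\bmod{n_1n_2}\LRar(\alpha^2\equiv1\bmod{n_1}$ and $\alpha^2\equiv1\bmod{n_2})$ is used. Everything else is the standard direct-product behavior of conjugacy classes together with the set-image description of $[g]_{n_i}$.
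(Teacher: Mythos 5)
Your proof is correct. The paper does not actually argue this lemma in the text: its ``proof'' is a one-line pointer to the proof of Proposition 4.5 in [HW], so there is no in-paper argument to compare against step by step. Your route --- the entrywise Chinese Remainder isomorphism $\slt(\bZ_{n_1n_2})\cong\slt(\bZ_{n_1})\ti\slt(\bZ_{n_2})$, descent to $\psl$ after checking that the scalar subgroups decompose compatibly, and the standard fact that the conjugacy class of $(g_1,g_2)$ in a direct product is $[g_1]\ti[g_2]$ --- is the natural self-contained argument, and you correctly isolate the only point that genuinely needs verification: that $Z_{n_1n_2}\cong Z_{n_1}\ti Z_{n_2}$. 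This matters here because the paper's $\psl(\bZ_n)$ is $\slt(\bZ_n)$ modulo \emph{all} scalars $\alpha I$ with $\alpha^2\equiv 1\bmod{n}$ (of order $\#\bZ_n^{(2)}$), not merely $\pm I$, and your characterization $\alpha^2\equiv1\bmod{n_1n_2}\LRar(\alpha^2\equiv1\bmod{n_1}\text{ and }\alpha^2\equiv1\bmod{n_2})$, combined with CRT surjectivity onto pairs $(\alpha_1,\alpha_2)$, is exactly what makes the quotient split. The identification of $[g]_{n_i}$ with $[g_i]$ as the image of $[g]$ under the coordinate projection, including the observation that projecting $[g_1]\ti[g_2]$ onto a factor is surjective, is also handled correctly. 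Your write-up can stand as a complete proof where the paper only offers a citation.
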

\begin{proof}
See the discussions in the proof of Proposition 4.5 in \cite{HW}. 
\end{proof}

\begin{lem}\label{factor}
Let $n_1,n_2\geq1$ be integers 
and $n_1=p_1^{r_1}\cdots p_l^{r_l}$ the factorization of $n_1$. 
Denote by $m_1$ the maximal divisor of $n_2$ relatively prime to $n_1$, 
namely $n_2=p_1^{k_1}\cdots p_l^{k_l}m_1$ for some $k_1,\cdots,k_l\geq0$
and relatively prime $m_1,n_1\geq1$. 
Then we have
\begin{align*}
&\eta\left(D(n_1)\equiv \delta\bmod{n_1},n_2\mid u\right)\\
&=\eta(m_1\mid u)
\prod_{i=1}^l\eta\left(D(p_i)\equiv (n_1)_{p_i}^2\delta\bmod{p^{r_i}},p_i^{k_i}\mid u\right).
\end{align*}
\end{lem}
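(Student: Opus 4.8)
The plan is to reduce the multiplicative factorization to the Chinese Remainder Theorem at the level of $\psl(\bZ_{n_1 n_2})$ and then apply the already-established counting results. First I would write $n_1 n_2 = m_1 \prod_{i=1}^{l} p_i^{r_i + k_i}$, where the factors $m_1, p_1^{r_1+k_1}, \dots, p_l^{r_l+k_l}$ are pairwise coprime by the hypothesis that $m_1$ is coprime to $n_1$ and the $p_i$ are distinct primes. The key structural input is that $\eta$ is defined as a limiting density $\eta(\cC) = \lim_{x\to\infty} \pi(x;\cC)/\li(x^2)$, and by Lemma \ref{lem1} this density equals $\sum_{[g]} \#[g]/v(n_1 n_2)$ summed over the conjugacy classes $[g]$ in $\psl(\bZ_{n_1 n_2})$ compatible with the imposed arithmetic conditions. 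So the whole lemma becomes an identity about these densities, and I would verify it by showing the count of compatible conjugacy classes factors as a product over the coprime pieces.

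The central step is the multiplicativity of $\#[g]$ across coprime moduli, which is exactly Lemma \ref{modpq}: for coprime $N_1, N_2$ one has $\#[g] = \#[g]_{N_1}\,\#[g]_{N_2}$. Applying this iteratively to the factorization $n_1 n_2 = m_1 \prod_i p_i^{r_i+k_i}$ splits each compatible class count into a product of local class counts, one for $m_1$ and one for each prime power $p_i^{r_i+k_i}$. I must also check that the \emph{conditions} split compatibly. The condition ``$n_2 \mid u$'' becomes, component by component, ``$m_1 \mid u$'' on the $m_1$-part and ``$p_i^{k_i} \mid u$'' on each $p_i$-part, since $u_\gam$ reduces correctly under CRT. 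The discriminant condition ``$D(n_1) \equiv \delta \bmod n_1$'' only constrains the $p_i$-parts (it is vacuous on $m_1$, which is coprime to $n_1$), and on the $p_i$-part it becomes a congruence modulo $p_i^{r_i}$. The shift from $\delta$ to $(n_1)_{p_i}^2\,\delta$ arises because $D(n_1) = u_{n_1}^2 D_\gam$ was defined using the prime-to-$n_1$ part of $u$; when one isolates the local behaviour at $p_i$, one must rescale by the square of $(n_1)_{p_i}$, the maximal divisor of $n_1$ coprime to $p_i$, to match the normalization $D(p_i)$ used in Proposition \ref{count} and Lemma \ref{lempr}. Combining the factored class counts with the multiplicativity of the index $v(n_1 n_2) = v(m_1)\prod_i v(p_i^{r_i+k_i})$ then yields the product formula for $\eta$.

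The hard part will be bookkeeping the discriminant normalization correctly, i.e.\ justifying that the local condition at $p_i$ is genuinely $D(p_i) \equiv (n_1)_{p_i}^2\,\delta \bmod{p_i^{r_i}}$ rather than simply $\equiv \delta$. This requires unwinding the definitions $u_n := (u_\gam)_n$ and $D(n) := u_n^2 D_\gam$ and tracking how the prime-to-$n_1$ part of $u_\gam$ interacts with the prime-to-$p_i$ part; the factor $(n_1)_{p_i}^2$ records precisely the portion of $u_{n_1}^2$ supported away from $p_i$. Everything else is a routine CRT decomposition of $\psl(\bZ_{n_1 n_2}) \cong \psl(\bZ_{m_1}) \times \prod_i \psl(\bZ_{p_i^{r_i+k_i}})$ combined with Lemma \ref{modpq}, the definition of $\eta$ via Lemma \ref{lem1}, and the identification of each local factor with $\eta(D(p_i) \equiv (n_1)_{p_i}^2\delta \bmod{p_i^{r_i}}, p_i^{k_i}\mid u)$ through Lemma \ref{lempr}.
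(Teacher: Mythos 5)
Your proposal is correct and follows essentially the same route as the paper: both reduce the statement to the identity $\eta(\cC)=\sum_{[g]}\#[g]/v(n_1n_2)$ over compatible conjugacy classes (via Lemma \ref{lem1}), observe that the condition ``$D(n_1)\equiv\delta\bmod{n_1},\ n_2\mid u$'' splits into the local conditions at each $p_i$ together with $m_1\mid u$, and then invoke the CRT multiplicativity $\#[g]=\#[g]_{n_1}\#[g]_{n_2}$ of Lemma \ref{modpq} together with the multiplicativity of $v$. Your extra care with the normalization factor $(n_1)_{p_i}^2$ fills in a step the paper leaves implicit, but it is not a different argument.
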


\begin{proof}
We first note that the condition ``$D(n_1)\equiv \delta\bmod{n_1},n_2\mid u$" 
is equivalent that ``$D(p_i)\equiv (n_1)_{p_i}^2\delta\bmod{p^{r_i}},p_i^{k_i}\mid u$ 
for any $1\leq i\leq l$ and $m_1\mid u$". 
As discussed in the proofs of Lemma \ref{lem1} and \ref{cong}, 
it holds that
\begin{align*}
\eta\left(D(n_1)\equiv \delta\bmod{n_1},n_2\mid u\right)
=&\sum_{\begin{subarray}{c}[g]\in\Conj(\psl(\bZ_{n_1n_2}))\\ 
(D_g)(n_1)\equiv \delta\bmod{n_1},n_2\mid u_g \end{subarray}}\frac{\#[g]}{v(n_1n_2)},\\
\eta\left(D(p_i)\equiv (n_1)_{p_i}^2\delta\bmod{p^{r_i}},p_i^{k_i}\mid u\right)
=&\sum_{\begin{subarray}{c}[g]\in\Conj(\psl(\bZ_{n_1n_2}))\\ 
(D_g)(p_i)\equiv (n_1)_{p_i}^2\delta\bmod{p_i^{r_i}}\\p_i^{k_i}\mid u_g \end{subarray}}
\frac{\#[g]}{v(p_i^{r_i+k_i})},\\
\eta(m_1\mid u)=&\frac{1}{v(m_1)}.
\end{align*}
Then the desired result can follow easily from Lemma \ref{modpq}.
\end{proof}

We now prove the following proposition. 
This is important to prove the main theorems.
\begin{prop}\label{proppr}
Let $p$ be a prime number, $r\geq1$ an integer and $\delta\in\bZ_{p^r}$. 
Then $\eta(D(p)\equiv\delta\bmod{p^r})$ exists with
\begin{align*}
\eta(D(p)\equiv\delta\bmod{p^r})
=\sum_{k=0}^{\infty}\eta(D(p)\equiv\delta\bmod{p^r},p^k\mmid u)
=\sum_{k=0}^{\infty}\frac{\#\hGam(\delta,p^r,k)}{v(p^{r+k})},
\end{align*}
and the reminder term is estimated by 
\begin{align*}
\left|R(x;D(p)\equiv\delta\bmod{p^r})\right|
\leq \sum_{k=0}^{\lfloor \log_p x\rfloor}\#\hGam(\delta,p^r,k)B_{\sz}x^{3/2}.
\end{align*}
\end{prop}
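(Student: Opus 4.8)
The plan is to express the quantity $\eta(D(p)\equiv\delta\bmod{p^r})$ as an infinite sum over the exact power $p^k\mmid u$, then identify each summand via Lemma \ref{lempr} and control the tail. First I would observe that the set $\D(D(p)\equiv\delta\bmod{p^r})$ decomposes as a disjoint union according to the exact $p$-adic valuation of $u=u_\gam$; that is, every $D$ with $D(p)\equiv\delta\bmod{p^r}$ satisfies $p^k\mmid u$ for a unique $k\geq0$, so that
\begin{align*}
\pi(x;D(p)\equiv\delta\bmod{p^r})=\sum_{k=0}^\infty \pi(x;D(p)\equiv\delta\bmod{p^r},\,p^k\mmid u).
\end{align*}
The sum is in fact finite for each fixed $x$: since $N(\gam)=\epsilon(D)^2<x^2$ forces the larger eigenvalue, and hence $t_\gam$ and $u_\gam$, to be bounded, we have $p^k\mid u_\gam\leq C x$ for some constant, so only $k\leq \lfloor\log_p x\rfloor$ contributes a nonzero term. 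This truncation is what makes the reminder estimate a finite sum.

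Next I would apply Lemma \ref{lempr} to each term, which gives $\eta(D(p)\equiv\delta\bmod{p^r},p^k\mmid u)=\#\hGam(\delta,p^r,k)/v(p^{r+k})$, and Lemma \ref{lem1} (with $n_1=p^r$, $n_2=p^k$) to each reminder term $R(x;D(p)\equiv\delta\bmod{p^r},p^k\mmid u)$, bounding it by $\#\hGam(\delta,p^r,k)\,B_{\sz}\,x^{3/2}$ after noting $\eta(\cdots)v(p^{r+k})=\#\hGam(\delta,p^r,k)$. Summing over $0\leq k\leq\lfloor\log_p x\rfloor$ and using the triangle inequality then yields the stated bound on $\left|R(x;D(p)\equiv\delta\bmod{p^r})\right|$. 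For the existence of $\eta$ as an infinite series, I would check convergence of $\sum_k \#\hGam(\delta,p^r,k)/v(p^{r+k})$ using the explicit formulas of Proposition \ref{count}: the factor $\cT(\delta;p^r,k)$ is bounded (equal to $2$, or $4$ when $p=2$, once $k$ is large enough that $p\mid\delta p^{2k}$), while $v(p^{r+k})$ grows like $p^{3k}$, so the tail is geometrically small.

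The main obstacle I anticipate is \emph{not} the algebra but the interchange of the limit defining $\eta$ with the infinite summation over $k$. One must justify that
\begin{align*}
\lim_{x\tinf}\frac{1}{\li(x^2)}\sum_{k=0}^{\lfloor\log_p x\rfloor}\pi(x;\dots,p^k\mmid u)
=\sum_{k=0}^\infty\lim_{x\tinf}\frac{\pi(x;\dots,p^k\mmid u)}{\li(x^2)},
\end{align*}
i.e.\ that the truncation error and the tail of the $\eta$-series both vanish in the limit. The reminder bound from Lemma \ref{lem1} does the heavy lifting here: the contribution of terms with $k$ near $\lfloor\log_p x\rfloor$ is of size $O(x^{3/2})$ per term times a geometrically decaying weight, which is $o(\li(x^2))=o(x^2/\log x)$ in aggregate, so the tail is negligible and the interchange is legitimate. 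Once this uniform control is in place, the three displayed equalities and the final estimate follow by assembling Proposition \ref{count}, Lemma \ref{lempr}, and Lemma \ref{lem1}.
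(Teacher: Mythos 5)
Your proposal is correct and follows essentially the same route as the paper: decompose $\pi(x;D(p)\equiv\delta\bmod{p^r})$ over the exact power $p^k\mmid u$ (finitely many $k$ for each $x$ since $u<\epsilon(D)+1<x+1$), identify each density via Lemma \ref{lempr}, bound each reminder via Lemma \ref{lem1}, and control the tail geometrically --- the paper uses the cruder bound $\eta(\cdot,p^k\mmid u)\leq\eta(p^k\mid u)=v(p^k)^{-1}\leq Cp^{-3k}$ where you invoke the explicit formulas of Proposition \ref{count}, but this is an immaterial difference. No gaps.
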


\begin{proof}
Since $u^2$ is a divisor of $t^2-4$, 
we have $u<t<\epsilon(D)+1$.  
Then we see that 
\begin{align*}
&\pi(x;D(p)\equiv\delta\bmod{p^r})=\sum_{k\geq0, p^k<x}\pi(x;D(p)\equiv\delta\bmod{p^r},p^k\mmid u).
\end{align*}
Divide the right hand side of the above as follows.
\begin{align*}
\sum_k\left( \eta(*)\li(x^2)+R(*) \right)=:L_1(x)\li(x^2)+L_2(x).
\end{align*}
Since $\eta(D(p)\equiv\delta\bmod{p^r},p^k\mmid u)\leq \eta(p^k\mid u)=v(p^k)^{-1}\leq Cp^{-3k}$, 
we have 
\begin{align*}
L_1(x)=\lim_{x\tinf}L_1(x)+O(x^{-3}).
\end{align*}
The estimation of $R(x;D(p)\equiv\delta\bmod{p^r},p^k\mmid u)$ directly follows from Lemma \ref{lem1}.  
Thus we can get the desired result.
\end{proof}

Combining Proposition \ref{count} and \ref{proppr}, 
we can get the explicit values of $\eta(D(p)\equiv\delta\bmod{p^r})$ as follows.

\begin{cor}\label{cor}
Let $p$ be a prime, $r\geq1$ an integer and $\delta\in\bZ_{p^r}$. 
Then $\eta(D(p)\equiv\delta\bmod{p^r})$ is described as follows.  

\noi{\bf The case of $p=2$.} 
\begin{align*}
&\eta(D(2)\equiv \delta\bmod{2^{r+2}})=\frac{1}{7\times2^{r+4}}\\
&\times\begin{cases} \disp 1,&(\delta\equiv 1\bmod{8}),\\
 75,&(\delta\equiv 5\bmod{8}),\\
 4+7\times2^{l+4},&(\delta\equiv -4+\alpha^22^{2l}\bmod{2^{r+2}}, 2\leq l<r/2),\\
 4+ 7\times 2^{[r/2]+3},&(\delta\equiv -4\bmod{2^{r+2}},
  \text{or $2\nmid r$, $\delta\equiv -4+2^{r+1}\bmod{2^{r+2}}$}),\\
 4,&(4\mmid\delta,\text{otherwise}),\\
 2^5,&(\text{$8\mmid \delta$ or $16\mmid \delta$}),\\
 2^8,&(32\mid \delta), \\
 0,&(\text{otherwise}).
\end{cases}
\end{align*} 

\noi{\bf The case of $p\geq3$.}
\begin{align*}
&\eta(D(p)\equiv \delta\bmod{p^{r}})=
\frac{1}{p^{r-1}(p^2-1)(p^3-1)}\\ \times
&\begin{cases} 
\disp \left(2+p^{[r/2]}(p^3-1)\right)\left(p+\left(\frac{\delta}{p}\right)\right),
&\disp \left(p\nmid \delta, \delta\equiv -4\bmod{p^r}\right),\\
\disp 2\left(1+p^{l}(p^3-1)\right)\left(p+\left(\frac{\delta}{p}\right)\right),
&\disp \big(p\nmid \delta, \delta\equiv -4+\alpha^2p^{2l}\bmod{p^r},\\ & 0\leq l<r/2\big),\\
\disp 2\left(p+\left(\frac{\delta}{p}\right)\right),
&\disp \left(p\nmid \delta, \text{otherwise}\right),\\
\disp 2p^2(p^2-1),&(p\mid \delta).
\end{cases}
\end{align*}\qed
\end{cor}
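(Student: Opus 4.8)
The plan is to combine the series formula for $\eta(D(p)\equiv\delta\bmod{p^r})$ from Proposition~\ref{proppr} with the explicit values of $\#\hGam(\delta;p^r,k)=\cT(\delta;p^r,k)\cA(\delta;p^r)/\#\bZ_{p^r}^{(2)}$ from Proposition~\ref{count}, and then sum the resulting geometric-type series in $k$. The key structural observation is that $\cA(\delta;p^r)$ depends only on $\delta$ (not on $k$), so it factors out of the sum over $k$; the entire $k$-dependence lives in $\cT(\delta;p^r,k)$ and in the denominator $v(p^{r+k})$. Thus the computation reduces to evaluating, for each arithmetic type of $\delta$, the single numerical series
\begin{align*}
\eta(D(p)\equiv\delta\bmod{p^r})
=\frac{\cA(\delta;p^r)}{\#\bZ_{p^r}^{(2)}}\sum_{k=0}^{\infty}\frac{\cT(\delta;p^r,k)}{v(p^{r+k})}.
\end{align*}

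First I would record $v(p^{r+k})=v(p^r)p^{3k}$ for $k\ge 1$ (from the product formula for $v(n)$, since the index grows by a factor $p^3$ at each step once $p\mid$ the level), so the denominators form a geometric progression with ratio $p^{-3}$. Next I would split the sum over $k$ according to the case structure of $\cT$ in Proposition~\ref{count}. For $p\ge 3$ the decisive split is between $k=0$ and $k\ge 1$: when $k\ge 1$ the condition $p\mid\delta p^{2k}$ always holds, giving the constant value $\cT(\delta;p^r,k)=2$, whereas the $k=0$ term carries the genuine arithmetic information about $\delta$ (the three subcases $\delta\equiv-4\bmod p^r$, $\delta\equiv-4+\alpha^2p^{2l}$, and ``otherwise''). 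So I would write
\begin{align*}
\sum_{k=0}^{\infty}\frac{\cT(\delta;p^r,k)}{v(p^{r+k})}
=\frac{\cT(\delta;p^r,0)}{v(p^r)}
+\frac{2}{v(p^r)}\sum_{k=1}^{\infty}p^{-3k}
=\frac{\cT(\delta;p^r,0)}{v(p^r)}+\frac{2}{v(p^r)(p^3-1)},
\end{align*}
and then substitute the three possible values of $\cT(\delta;p^r,0)$ together with $\cA(\delta;p^r)=p^{2r-1}(p+(\tfrac{\delta}{p}))$ when $p\nmid\delta$ (and $\cA=p^{2r-2}(p^2-1)$ when $p\mid\delta$, in which case only the $k\ge 1$ tail survives since $\cT(\delta;p^r,0)$ vanishes for such $\delta$ unless $\delta\equiv-4$). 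Collecting terms over the common denominator $p^{r-1}(p^2-1)(p^3-1)$ should reproduce the four listed cases, with the factor $(p+(\tfrac{\delta}{p}))$ coming directly from $\cA$ and the bracket $(2+p^{[r/2]}(p^3-1))$ or $2(1+p^{l}(p^3-1))$ arising from adding the $k=0$ term to the geometric tail.

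For $p=2$ the same strategy applies but the bookkeeping is heavier: the level shift is by $2^{r+2}$, the threshold in $\cT(\delta;2^r,k)$ is $32\mid 2^{2k}\delta$, and $\#\bZ_{2^{r+2}}^{(2)}=4$ for $r\ge 1$, so one must track the $2$-adic valuation of $\delta$ carefully through the many subcases ($\delta\equiv 1,5\bmod 8$; $4\mmid\delta$; $8\mmid\delta$ or $16\mmid\delta$; $32\mid\delta$; and the $\delta\equiv-4+\alpha^22^{2l}$ family). The main obstacle I anticipate is precisely this $p=2$ case analysis: ensuring that for each valuation of $\delta$ the correct value of $\cT(\delta;2^r,k)$ is used for small $k$ before the tail stabilizes at $\cT=4$, and that the resulting finite sum plus geometric tail collapses to the stated constants $1,75,4,2^5,2^8$ over the common denominator $7\times 2^{r+4}$ (the factor $7$ being $2^3-1$ from the geometric ratio $2^{-3}$). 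I would verify a couple of the $p=2$ subcases explicitly as a consistency check, since a single misplaced power of $2$ in the transition range of $k$ would corrupt the leading coefficient; the odd-$p$ cases are routine once the $k=0$ versus $k\ge1$ split is set up.
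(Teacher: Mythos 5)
Your overall strategy is exactly the one the paper intends: the paper offers no computation for this corollary at all, only the one-line remark that it follows by combining Propositions \ref{count} and \ref{proppr}, so writing $\eta(D(p)\equiv\delta\bmod{p^r})=\frac{\cA(\delta;p^r)}{\#\bZ_{p^r}^{(2)}}\sum_{k\geq0}\cT(\delta;p^r,k)v(p^{r+k})^{-1}$, noting $v(p^{r+k})=v(p^r)p^{3k}$, and splitting off the $k=0$ term from the geometric tail is the right (and essentially the only) route. For $p\geq3$ your reduction correctly reproduces the three cases with $p\nmid\delta$.

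However, there is one concrete error that would make your computation contradict the statement. You claim that when $p\mid\delta$ ``only the $k\geq1$ tail survives since $\cT(\delta;p^r,0)$ vanishes.'' It does not: the first case of Proposition \ref{count} reads $\cT=2$ whenever $p\mid\delta p^{2k}$, and at $k=0$ that condition is precisely $p\mid\delta$; equivalently, $4+\delta\equiv 4\bmod{p}$ is a unit square, so $T^2\equiv 4+\delta\bmod{p^r}$ has exactly two solutions. Keeping this term, your own formula gives
\begin{align*}
\frac{\cA(\delta;p^r)}{2}\cdot\frac{2}{v(p^r)}\left(1+\frac{1}{p^3-1}\right)=\frac{2p^{3-r}}{p^3-1}=\frac{2p^2(p^2-1)}{p^{r-1}(p^2-1)(p^3-1)},
\end{align*}
as stated; dropping it yields $2/\bigl(p^r(p^3-1)\bigr)$, smaller by a factor of $p^3$, and one can check that with your value the $\eta$'s over $\delta\in\bZ_p$ would no longer sum to $1$. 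Separately, the $p=2$ half of the corollary is only sketched, not executed: there the nonvanishing of $\cT(\delta;2^r,k)$ occupies a $\delta$-dependent range of $k$ (e.g.\ for $\delta\equiv1\bmod{8}$ one has $\cT_k=0$ for $k=0,1,2$ and $\cT_k=4$ for $k\geq3$), and the normalization of the level ($2^{r+2}$ versus $2^{r+k+2}$) must be tracked consistently against $v$ and $\#\bZ_{2^{r+2}}^{(2)}=4$. You correctly flag this as the main obstacle, but since that is where the constants $1,75,4,2^5,2^8$ actually come from, that part of the statement remains unverified in your proposal.
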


We further prepare the following lemma. 
This will be used to estimate the reminder terms in the main theorems. 

\begin{lem}\label{ubigT}
Let $x,T>0$ be large numbers with $T\ll x$. 
Then we have
\begin{align*}
\pi(x;\text{$m\mid u$ for $m>T$})=&O(x^2T^{-2+\epsilon}),\\
\pi(x;\text{$m^2\mid D$ for $m>T$})=&O(x^2T^{-1+\epsilon}) 
\end{align*}
for any $\epsilon>0$, where the implied constants depend on $\epsilon$.
\end{lem}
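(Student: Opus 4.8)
The plan is to translate both sums into counts of primitive hyperbolic conjugacy classes, parametrize by the trace, and then reduce everything to elementary divisor counting together with a uniform pointwise bound on $h(D)$. First I would use the bijection of Section~3.1 and the relation $N(\gam)=\epsilon(D_\gam)^2$ to write
\begin{align*}
\pi(x;\cC)=\#\{\gam\in\Prim(\sz)\divset N(\gam)<x^2,\ D_\gam\ \text{(resp. }u_\gam)\ \text{satisfies}\ \cC\}.
\end{align*}
Since the larger eigenvalue $\lambda$ of $\gam$ satisfies $\lambda+\lambda^{-1}=t_\gam$ and $N(\gam)=\lambda^2$, the constraint $N(\gam)<x^2$ is equivalent to $t_\gam\le X:=x+1$; moreover $D_\gam u_\gam^2=t_\gam^2-4$, and for primitive $\gam$ one has $u_\gam=u_1(D_\gam)$. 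For a fixed trace $t$ and fixed value $u$, the primitive classes with $u_\gam=u$ inject (via $\gam\mapsto Q_\gam$, Fact~\ref{factquad}) into the equivalence classes of forms of discriminant $(t^2-4)/u^2$, so their number is at most $h\big((t^2-4)/u^2\big)$.

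The key uniform input I would establish is the pointwise bound $h(D)\ll\sqrt D$. This follows from the class number formula $h(D)\log\epsilon(D)=\sqrt D\,L(1,\chi_D)$ with $L(1,\chi_D)\ll\log D$, combined with the lower bound $\epsilon(D)>\tfrac12\sqrt D$ (the larger eigenvalue exceeds $\tfrac12 t_1>\tfrac12 u_1\sqrt D\ge\tfrac12\sqrt D$), which gives $\log\epsilon(D)\gg\log D$ and hence $h(D)\ll\sqrt D$. The only other ingredient is the congruence count $\#\{t\le X\divset v^2\mid t^2-4\}\ll X\,v^{-2+\epsilon}$, obtained from $\#\{t\bmod v^2\divset t^2\equiv4\}\ll 2^{\omega(v)}\ll v^\epsilon$.

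The main computation then splits by which divisibility is forced. For the first sum the condition is $u_\gam>T$; setting $v=u_\gam$ one has $v>T$, $v^2\mid t^2-4$, and $h\big((t^2-4)/v^2\big)\ll X/v$, so
\begin{align*}
\pi(x;\text{$m\mid u$ for $m>T$})\ll\sum_{t\le X}\sum_{\begin{subarray}{c}v>T\\ v^2\mid t^2-4\end{subarray}}\frac Xv
=X\sum_{v>T}\frac1v\#\{t\le X\divset v^2\mid t^2-4\}\ll X^2\sum_{v>T}v^{-3+\epsilon}\ll x^2T^{-2+\epsilon}.
\end{align*}
For the second sum the condition $m^2\mid D_\gam$ with $m>T$ is equivalent to $(m\,u_\gam)^2\mid t^2-4$; setting $v=m\,u_\gam>T$ one sums over divisors $u_\gam\mid v$ using $h\big((t^2-4)/u_\gam^2\big)\ll X/u_\gam$ and $\sum_{u\mid v}u^{-1}\ll v^\epsilon$, giving
\begin{align*}
\pi(x;\text{$m^2\mid D$ for $m>T$})\ll X\sum_{v>T}v^\epsilon\#\{t\le X\divset v^2\mid t^2-4\}\ll X^2\sum_{v>T}v^{-2+\epsilon}\ll x^2T^{-1+\epsilon}.
\end{align*}
Thus the two exponents differ precisely because the first case carries an extra factor $v^{-1}$, whereas in the second this is replaced by the divisor sum $\sum_{u\mid v}u^{-1}\ll v^\epsilon$.

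The hard part is the uniform treatment of the range $v>\sqrt X$. There the congruence count acquires a secondary boundary term of size $O(v^\epsilon)$, and the bound ``$\le h$'' additionally over-counts imprimitive classes; summing this naively costs $\asymp X(\log X)^2$, which stays inside the claimed order only once $T\ll\sqrt X$. The real obstacle is therefore to show that the number of discriminants with $u_1(D)>T$ (respectively with a square divisor exceeding $T^2$) and $\epsilon(D)<x$ is genuinely of the stated size, uniformly in $T$. I would handle the tail by a separate argument for $v>\sqrt X$, bounding the fundamental pairs $(t,v)$ directly: the relation $\epsilon(D)\asymp v\sqrt{D}$ together with $\epsilon(D)<x$ forces $D$ small while $u_1(D)$ is large, and small $D$ have small $u_1(D)$, so such pairs are sparse. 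For the applications to the remainder terms, where $T$ is taken to be a small power of $x$, the range $T\lesssim\sqrt x$ already suffices and the secondary term is absorbed into the $\epsilon$.
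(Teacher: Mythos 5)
Your proposal follows essentially the same route as the paper's own proof: bound $h(D)$ pointwise by $\ll \sqrt{D}\ll t/u$ (the paper quotes Mollin's $h(D)\leq C\sqrt{D}\log D/\log\epsilon(D)$, which amounts to the same thing), count the traces $3\leq t\leq x$ with $u^2\mid t^2-4$ via the $\ll 2^{\omega(u)}$ square roots of $4$ modulo $u^2$, and sum over $u>T$ (respectively over $v=mu>T$, with the divisor sum $\sum_{u\mid v}u^{-1}\ll v^{\epsilon}$ accounting for the weaker exponent in the second estimate). The one point where you go beyond the paper is the tail $v>\sqrt{x}$: your concern is legitimate --- the paper silently drops the $O(1)$ per residue class, i.e.\ an $O(2^{\omega(u)})$ boundary term in the congruence count, whose total contribution is $O(x^{1+\epsilon})$ and is dominated by $x^{2}T^{-2+\epsilon}$ only for $T\ll\sqrt{x}$ --- but your sketched repair (``small $D$ have small $u_1(D)$'') is not correct as stated, since $u_1(D)$ can be exponentially large in $\sqrt{D}$; fortunately, as you note, the applications only invoke the lemma with $T$ and $U$ small powers of $x$, where both your argument and the paper's are complete.
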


\begin{proof} 
It is known (see, e.g. \cite{Mo}) that 
there exists a constant $C>0$ such that 
\begin{align*}
h(D)\leq C\frac{\sqrt{D}\log{D}}{\log{\epsilon(D)}}.
\end{align*}
Then, according to Fact \ref{factquad}, we get 
\begin{align*}
&\pi(x;\text{$m\mid u$ for $m>T$})
<C\sum_{m>T}\sum_{\begin{subarray}{c}u\geq m\\ m\mid u\end{subarray}}
\sum_{\begin{subarray}{c}3\leq t\leq x \\ u^2\mid t^2-4\end{subarray}}
\sqrt{\frac{t^2-4}{u^2}}\frac{\log{\disp\frac{t^2-4}{u^2}}}{\log{t}}
<C\sum_{m>T}\sum_{\begin{subarray}{c}u\geq m\\ m\mid u\end{subarray}}u^{-1}
\sum_{\begin{subarray}{c}3\leq t\leq x \\ u^2\mid t^2-4\end{subarray}}t. 
\end{align*}
Since $u^2\mid t^2-4$ is equivalent that $t\equiv \pm2\bmod{p^{2k}}$ for any $p^k\mmid u$, 
we have
\begin{align*}
\pi(x;\text{$m\mid u$ for $m>T$})<
C\sum_{m>T}\sum_{\begin{subarray}{c}u\geq m\\ m\mid u\end{subarray}}\frac{2^{\omega(u)}}{u^3}x^2,
\end{align*}
where $\omega(u)$ is the number of distinct prime factors of $u$. 
Thus the estimate 
\begin{align}
\sum_{u<Y}2^{\omega(u)}=O(Y^{\epsilon}),\label{omega}
\end{align}
due to \cite{HarWr}, gives  
\begin{align*}
\pi(x;\text{$m\mid u$ for $m>T$})<&
C\sum_{m>T}\sum_{u}\frac{2^{\omega(u)}}{m^3u^3}x^2
<C\sum_{m>T}\frac{2^{\omega(m)}}{m^{3-\epsilon}}x^2=O(x^2T^{-2+\epsilon}).
\end{align*}
This completes the first estimation in the lemma. 

Similarly, we see that 
\begin{align*}
\pi(x;\text{$m^2\mid D$ for $m>V$})
<&\sum_{m>V}\sum_{u<x/m}\frac{1}{u}\sum_{\begin{subarray}{c}t<x\\ t^2-4\mid m^2u^2\end{subarray}}t
<\sum_{m>V}\sum_{u<x/m}\frac{2^{\omega(mu)}}{u^3m^2}x^2.
\end{align*}
Then we can get the second estimate due to \eqref{omega}.
\end{proof}

\subsection{Main theorem}

The main result in this paper is as follows. 
\begin{thm}\label{thm1}
Let $n\geq1$ be an integer with the factorization $n=\prod_{p\mid n}p^r$ and $\delta\in\bZ_n$. 
For $\alpha\in\bZ_n^*$, denote by 
\begin{align*}
W(\alpha;n):=\xi(n)\sum_{m\geq1,m\equiv\alpha\bmod{n}}\beta(m),
\end{align*}
where 
\begin{align*}
\xi(n):=&\prod_{p\nmid n}\left(1-v(p)^{-1}\right)
=\prod_{p\nmid n,p\neq 2}\left(1-\frac{2}{p(p^2-1)}\right)
\times \begin{cases}\disp \frac{5}{6},&(2\nmid 2),\\ 1,&(n\mid 2),\end{cases}\\
\beta(m):=&\prod_{p\mid m}\frac{v(p^l)^{-1}-v(p^{l+1})^{-1}}{1-v(p)^{-1}} 
=\prod_{p\mid m,p\neq 2}2p^{-3l}\frac{p^3-1}{p^3-p-2}\times 
\begin{cases}1,&(e=0),\\ \disp \frac{3}{20},&(e=1),\\ \disp \frac{3}{80},&(e=2),\\ \disp \frac{7}{5\times 2^{3e-2}},&(e\geq3),\end{cases}
\end{align*}
for $m=2^e\prod_{p\mid m,p\neq 2}p^l$. 
Then $\eta(D\equiv \delta\bmod{n})$ exists and satisfies that 
\begin{align*}
\eta(D\equiv \delta\bmod{n})
=\sum_{\alpha\in\bZ_n^{*}}W(\alpha;n)\prod_{p\mid n}\eta(D(p)\equiv\delta\alpha^2\bmod{p^{r}}).
\end{align*}
The reminder term is estimated by
\begin{align*}
R(x;D\equiv\delta\bmod{n})=O\left(x^{5/3+\epsilon}\right)
\end{align*}
for any $\epsilon>0$, where the implied constant depends on $n$ and $\epsilon$.
\end{thm}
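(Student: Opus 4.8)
The plan is to reduce the congruence condition on the raw discriminant $D=D_\gamma$ to conditions on the adjusted discriminants $D(p)$, for which Corollary \ref{cor} already supplies explicit densities, and then to control the resulting infinite sum by truncating on the size of $u$. The starting point is the identity $D(n)=u_n^2D$: since $u_n$ is coprime to $n$ it is a unit modulo $n$, so for a fixed residue $\alpha\in\bZ_n^{*}$ the event $u_n\equiv\alpha\bmod n$ converts ``$D\equiv\delta\bmod n$'' into ``$D(n)\equiv\alpha^2\delta\bmod n$''. Because $u_n\bmod n$ is a well-defined element of $\bZ_n^{*}$ for every primitive class, partitioning the classes according to $\alpha=u_n\bmod n$ gives the genuine decomposition
\begin{align*}
\eta(D\equiv\delta\bmod n)=\sum_{\alpha\in\bZ_n^{*}}\eta\left(u_n\equiv\alpha\bmod n,\ D(n)\equiv\alpha^2\delta\bmod n\right),
\end{align*}
and it remains to evaluate each summand and to sum the associated remainder terms.

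For the leading term I would first isolate the coprime-to-$n$ part of $u$. Decomposing further over the exact value $u_n=m$ (so that $m\equiv\alpha\bmod n$ and $\gcd(m,n)=1$) and treating the primes dividing $n$ separately from those not dividing $n$, the multiplicativity provided by Lemma \ref{modpq} and Lemma \ref{factor} makes the density factor into local contributions. The part coming from the primes not dividing $n$ is exactly $\eta(u_n=m)=\xi(n)\beta(m)$: for each $p\mid m$ it records the density $v(p^l)^{-1}-v(p^{l+1})^{-1}$ that $p^l\mmid u$, and for each $p\nmid nm$ the density $1-v(p)^{-1}$ that $p\nmid u$, which is precisely how $\xi(n)$ and $\beta(m)$ are assembled. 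Summing over $m\equiv\alpha\bmod n$ then identifies $W(\alpha;n)$ with $\eta(u_n\equiv\alpha\bmod n)$. For the primes dividing $n$, Proposition \ref{proppr} sums the $p$-power part $p^{k}\mmid u$ back up into $\eta(D(p)\equiv\alpha^2\delta\bmod p^{r})$, whose value is the one in Corollary \ref{cor}. Assembling these pieces yields the stated product formula; the only point requiring care is to track the quadratic twisting factors (the $(n)_{p}^2$ of Lemma \ref{factor} together with the $\alpha^2$ coming from $D(n)=u_n^2D$) consistently across the primes, which is routine given the lemmas.

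The genuinely delicate part is the remainder estimate, and this is where I expect the main obstacle. The individual congruence conditions are only accessible through the Tchebotarev-type prime geodesic theorem, whose error (Lemma \ref{lem1}) is controlled only at a \emph{fixed} finite level; summing the bound over all configurations of $u$ would diverge. The remedy is to truncate at $u\le T$. For the resolved range each configuration (a choice of $m\le T$ and of the exponents $p^{k}\mmid u$ for $p\mid n$, realised as a bounded alternating sum of divisibility conditions) contributes, by Lemma \ref{lem1}, a remainder bounded by $\eta(\cdots)\,v(\text{level})\,B_{\sz}x^{3/2}$; since the level has size $\asymp n^3u^3$ while $\eta(\cdots)\asymp u^{-3}$, the product is $\asymp n^3$ uniformly in the configuration, so the $O(T^{1+\epsilon})$ configurations contribute a total of order $n^3T x^{3/2+\epsilon}$. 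The tail $u>T$ is discarded crudely: by Lemma \ref{ubigT} its entire contribution to both $\pi(x;\cdot)$ and to the main term is $O(x^2T^{-2+\epsilon})$. Balancing $Tx^{3/2}$ against $x^2T^{-2}$ forces $T\asymp x^{1/6}$, and both pieces then become $O(x^{5/3+\epsilon})$, the asserted bound. Finally, the convergence of $\sum_m\beta(m)$ (since $\beta(m)\asymp m^{-3}$) together with $x^{5/3}=o(\li(x^2))$ simultaneously shows that $\eta(D\equiv\delta\bmod n)$ exists, completing the proof.
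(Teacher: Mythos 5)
Your proposal follows essentially the same route as the paper's proof: truncation in $u$ at a parameter $T$ via Lemma \ref{ubigT}, decomposition over the residue $\alpha=u_n\bmod n$ using $D(n)=u_n^2D$, M\"obius inversion to isolate the exact value $u=m$, term-by-term application of the Tchebotarev-type bound of Lemma \ref{lem1} (giving a resolved-range error of order $n^3x^{3/2}T\log T$), factorization of the densities through Lemmas \ref{modpq} and \ref{factor} and Proposition \ref{proppr} into $W(\alpha;n)=\xi(n)\sum_{m\equiv\alpha}\beta(m)$ times the local factors, and the balancing choice $T=x^{1/6}$ yielding $O(x^{5/3+\epsilon})$. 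The argument is correct and matches the paper's proof in both structure and the final exponents.
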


\begin{rem}\label{Product}
It is easy to see that 
\begin{align*}
\sum_{\alpha\in \bZ_n^*}W(\alpha;n)=1.
\end{align*}
Then it holds that
\begin{align}
\eta\left(D\equiv \delta\bmod{n}\right)=\prod_{p\mid n}\eta\left(D(p)\equiv \delta\bmod{p^r}\right).
\end{align}
if the value
\begin{align*}
\prod_{p\mid n}\eta(D(p)\equiv\delta\alpha^2\bmod{p^{r}})
\end{align*}
does not depend on $\alpha\in\bZ_n^*$. 
For example, we have 
\begin{align}\label{mid}
\eta(n\mid D)=\prod_{p\mid n}\eta\left(p^r\mid D(p)\right).
\end{align}
\end{rem}

\begin{rem}
Let $\{\chi\}$ be the set of Dirichlet's characters modulo $n$ and $\varphi(n):=\#\bZ_n^*$. 
Then $W(\alpha;n)$ is written by 
\begin{align*}
W(\alpha;n):=\frac{1}{\varphi(n)}\sum_{\chi\bmod{n}}\chi(\alpha^{-1})\xi(n)\sum_{m\geq1,\gcd(n,m)=1}\chi(m)\beta(m).
\end{align*}
Since $\chi(n_1n_2)=\chi(n_1)\chi(n_2)$ and $\beta(n_1n_2)=\beta(n_1)\beta(n_2)$ for relatively prime $n_1,n_2$, 
we get
\begin{align*}
\sum_{m\geq1,\gcd(n,m)=1}\chi(m)\beta(m)=\prod_{p\nmid n}\sum_{l\geq0}\chi(p^l)\beta(p^l).
\end{align*}
Calculating the above carefully, we can obtain the following expression of $W(\alpha;n)$ like the Euler product.
\begin{align}
W(\alpha;n)=&\frac{1}{\varphi(n)}\sum_{\chi\bmod{n}}\chi(\alpha^{-1})
\prod_{p\geq3,p\nmid n}\left(1+\frac{2p^2(\chi(p)-1)}{(p^2-1)(p^3-\chi(p))}\right)\notag\\
&\times \begin{cases}1,&(2\mid n),\\ 
\disp \left(1+\frac{(\chi(2)-1)(32+4\chi(2)+\chi(2)^2)}{24(8-\chi(2))}\right),&(2\nmid n).\end{cases}
\end{align}
\end{rem}

\begin{proof}
Let $T>0$ be a number with $T\ll x$.  
Then, according to Lemma \ref{ubigT}, we have
\begin{align}\label{bigT}
\pi(x;D\equiv \delta\bmod{n})=\pi(x;D\equiv \delta\bmod{n},u<T)+O(x^2T^{-2+\epsilon}).
\end{align}
Since $D(n)=u_n^2D$ for given $n$, 
the first term in the right hand side is described as follows.
\begin{align*}
\pi(x;D\equiv\delta\bmod{n},u<T)
=&\sum_{m<T}\pi(x;D\equiv \delta\bmod{n},u=m)\\
=&\sum_{\alpha\in\bZ_n^*}\sum_{\begin{subarray}{c}m_n\equiv\alpha\bmod{n}\\ m<T\end{subarray}}
\pi\left( x;D(n)\equiv \delta\alpha^2\bmod{n},u=m \right).
\end{align*}
Here, the condition ``$u=m$" is equivalent to ``$m\mid u$ and $mm_1\nmid u$ for any $m_1>1$". 
Then we have
\begin{align*}
&\pi(x;D\equiv\delta\bmod{n},u<T)\\
=&\sum_{\alpha\in\bZ_n^*}\sum_{\begin{subarray}{c}m_n\equiv\alpha\bmod{n}\\ m<T\end{subarray}}
\sum_{m_1<T/m}\mu(m_1)\pi(x;D(n)\equiv\delta\alpha^2\bmod{n},mm_1\mid u)\\
=&\sum_{\alpha\in\bZ_n^*}\sum_{\begin{subarray}{c}m_n\equiv\alpha\bmod{n}\\ m<T\end{subarray}}
\sum_{m_1<T/m}\mu(m_1)\Big(\eta(D(n)\equiv\delta\alpha^2\bmod{n},mm_1\mid u)\li(x^2)\\
&+R(x;D_{m}\equiv\delta\alpha^2\bmod{n},mm_1\mid u)\Big)\\
=:&S_1(T)\li(x^2)+S_2(T,x),
\end{align*}
where $\mu(m)$ is the M\"{o}bius function. 
Due to Lemma \ref{lem1}, \ref{lempr} and \ref{factor}, we get
\begin{align}
&\left|S_2(U,x)\right|\notag\\
\leq &C\sum_{\alpha\in\bZ_n^*}\sum_{\begin{subarray}{c}m_n\equiv\alpha\bmod{n}\\ m<T\end{subarray}}
\sum_{m_1<T/m}\eta(D_m\equiv \delta\alpha^2\bmod{n},mm_1\mid u)v(nmm_1)x^{3/2}\notag\\
\leq &C\sum_{\alpha\in\bZ_n^*}\sum_{\begin{subarray}{c}m_n\equiv\alpha\bmod{n}\\ m<T\end{subarray}}
\sum_{m_1<T/m}n^3x^{3/2}\leq C'n^3x^{3/2}T\log{T}\label{errordelta}
\end{align} 
for some $C>0$.
On the other hand, since 
$\eta\left(D(n)\equiv\delta\alpha^2\bmod{n},mm_1\mid u\right)\leq \eta(mm_1\mid u)=v(mm_1)^{-1}\leq C(mm_1)^{-3}$, 
we have 
\begin{align}
S_1(T)=\lim_{T\tinf}S_1(T)+O(T^{-2}).\label{s1}
\end{align}
Thus, combining \eqref{bigT}, \eqref{errordelta} and \eqref{s1}, we get
\begin{align*}
&\eta(D\equiv\delta\bmod{n})\\&=
\sum_{\alpha\in\bZ_n^*}\sum_{\begin{subarray}{c}m_n\equiv\alpha\bmod{n}\\ m<T\end{subarray}}
\sum_{m_1<T/m}\mu(m_1)\eta(D(n)\equiv\delta\alpha^2\bmod{n},mm_1\mid u),\\
&R(x;D\equiv\delta\bmod{n})=O(x^{3/2}T\log{T})+O(T^{-2+\epsilon}x^2).
\end{align*}
Taking $T=x^{1/6}$, we can estimate the reminder term as in the theorem.  

We now study $\eta(D\equiv\delta\bmod{n})$. 
Denote by $n=p_1^{r_1}\cdots p_l^{r_l}$ the factorization of $n$. 
First calculate the sum over $m_1$.
\begin{align*}
&\sum_{m_1}\mu(m_1)\eta(D(n)\equiv\delta\alpha^2\bmod{n},mm_1\mid u)\\
=&\sum_{m_1:\gcd(m_1,n)=1}\mu(m_1)\sum_{0\leq k_1',\cdots,k_l'\leq1}(-1)^{k_1'+\cdots+k'_l}
\eta\left(D(n)\equiv\delta\alpha^2\bmod{n},p_1^{k'_1}\cdots p_l^{k'_l}mm_1\mid u\right)\\
=&\sum_{m_1:\gcd(m_1,n)=1}\mu(m_1)
\eta\left(D(n)\equiv\delta\alpha^2\bmod{n},mm_1\mid u,p_imm_1\nmid u (\forall i)\right).
\end{align*}
Next, take the sum over $m$. 
Due to Lemma \ref{factor}, we have
\begin{align*}
&\sum_{m:m_n\equiv\alpha\bmod{n}}\sum_{m_1:\gcd(m_1,n)=1}
\mu(m_1)\eta\left(D(n)\equiv\delta\alpha^2\bmod{n},mm_1\mid u,p_imm_1\nmid u (\forall i)\right)\\
=&\sum_{\begin{subarray}{c}m:\gcd(m,n)=1\\ m\equiv\alpha\bmod{n}\end{subarray}}
\sum_{k_1,\cdots,k_l\geq0}\sum_{m_1:\gcd(m_1,n)=1}\mu(m_1)\\
&\times \eta\left(D(n)\equiv\delta\alpha^2\bmod{n},mm_1\mid u,p_i^{k_i}\mmid u (\forall i)\right)\\
=&\sum_{\begin{subarray}{c}m:\gcd(m,n)=1\\ m\equiv\alpha\bmod{n}\end{subarray}}
\sum_{m_1:\gcd(m_1,n)=1}\mu(m_1)\eta(mm_1\mid u)\prod_{i=1}^{l}\left(\sum_{k_i\geq0}
\eta\left(D({p_i})\equiv n_{p_i}^2\delta\alpha^2\bmod{p_i^{r_i}},p_i^{k_i}\mmid u\right)\right)\\
=&\prod_{i=1}^{l}\eta\left(D({p_i})\equiv n_{p_i}^2\delta\alpha^2\bmod{p_i^{r_i}}\right)
\sum_{\begin{subarray}{c}m:\gcd(m,n)=1\\ m\equiv\alpha\bmod{n}\end{subarray}}
\sum_{m_1:\gcd(m_1,n)=1}\mu(m_1)\eta(mm_1\mid u).
\end{align*} 
Let $m=q_1^{i_1}\cdots q_t^{i_t}$ be the factorization of $m$. 
Since $\eta(ab\mid u)=\eta(a\mid u)\eta(b\mid u)$ for relatively prime $a,b$, 
we have
\begin{align*}
&\sum_{m_1:\gcd(m_1,n)=1}\mu(m_1)\eta(mm_1\mid u)\\
=&\sum_{m_1:\gcd(m_1,nm)=1}\sum_{0\leq j_1,\cdots,j_t\leq1}\mu(q_1^{j_1}\cdots q_t^{j_t}m_1)
\eta(q_1^{i_1+j_1}\cdots q_t^{i_t+j_t}m_1\mid u)\\
=&\sum_{m_1:\gcd(m_1,nm)=1}\mu(m_1)\eta(m_1\mid u)
\prod_{k=1}^t\left(\eta(q_k^{i_k}\mid u)-\eta(q_k^{i_k+1}\mid u)\right)\\
=&\prod_{p\nmid nm}\left(1-\eta(p\mid u)\right)
\prod_{k=1}^{t}\left(\eta(q_k^{i_k}\mid u)-\eta(q_k^{i_k+1}\mid u)\right).
\end{align*}
Since $\eta(n\mid u)=v(n)^{-1}$, the above coincides $\xi(n)\beta(m)$ 
and then the expression of $\eta(D\equiv\delta\bmod{n})$ in the theorem follows immediately.
\end{proof}

\vbpt 

\noi{\bf Example.} As an example, we study the case of $n=5$. 
According to Theorem \ref{thm1}, we have
\begin{align*}
\eta(D\equiv 0)=&\eta(D(5)\equiv 0),\\
\eta(D\equiv 1)=&\eta(D(5)\equiv 1)W_1+\eta(D(5)\equiv 4)W_2,\\
\eta(D\equiv 2)=&\eta(D(5)\equiv 2)W_1+\eta(D(5)\equiv 3)W_2,\\
\eta(D\equiv 3)=&\eta(D(5)\equiv 3)W_1+\eta(D(5)\equiv 2)W_2,\\
\eta(D\equiv 4)=&\eta(D(5)\equiv 4)W_1+\eta(D(5)\equiv 1)W_2,
\end{align*} 
where $W_1:=W(1;5)+W(4;5)$ and $W(2):=W(2;5)+W(3;5)$. 
Due to Corollary \ref{cor}, we see that 
\begin{align*}
\eta(D(5)\equiv 0)=\frac{25}{62},\quad
&\eta(D(5)\equiv 1)=\frac{63}{248},
\qquad \eta(D(5)\equiv 2)=\frac{125}{372},\\
&\eta(D(5)\equiv 3)=\frac{1}{372},
\qquad \eta(D(5)\equiv 4)=\frac{1}{248}.
\end{align*}
We can compute that $W_1=0.80233\cdots$ and $W_2=0.19766\cdots$. 
Thus we get the following approximations.
\begin{align*}
\eta(D\equiv 0)=&0.40322\cdots,\\
\eta(D\equiv 1)=&0.20461\cdots,\qquad
\eta(D\equiv 2)=0.27013\cdots,\\
\eta(D\equiv 3)=&0.06857\cdots,\qquad
\eta(D\equiv 4)=0.05344\cdots.
\end{align*}

\subsection{For fundamental discriminants} 

Let $d$ be $D$ when the square free factor of $D$ is $1$ modulo $4$ 
and $D/4$ otherwise. 
Denote by $\Nsf$ the set of positive square free integers. 
Raulf \cite{Ra} proved that 
\begin{align*}
\eta(d\in \Nsf)=&\frac{75}{112}\prod_{p\geq3}\left(1-\frac{2p}{p^3-1}\right)=0.42699\cdots,\\
R(x;d\in \Nsf)=&O(x^{2-\epsilon})
\end{align*}
for some $\epsilon>0$. 
We now extend it as follows.

\begin{thm}\label{thm2}
Let $n\geq1$ be an integer factored by $n=2^e\prod_{p\mid n}p^r$ and $\delta\in \bZ_n$.
Then $\eta(d\in\Nsf, d\equiv \delta\bmod{n})$ exists and satisfies that 
\begin{align*}
&\eta(d\in \Nsf,d\equiv \delta\bmod{n})\\
=&\prod_{p\nmid 2n}\left(1-\frac{2p}{p^3-1}\right)
\sum_{\alpha\in\bZ_{n'}^*}W(\alpha;n')\\
&\times\Bigg[ \eta(D(2)\equiv 1\bmod{4},D(2)\equiv \delta\alpha^2\bmod{2^e})
\prod_{p\mid n} \eta(D(p)\equiv \delta\alpha^2\bmod{p^{r}},p^2\nmid D)\\
&+\eta(D(2)\equiv 8,12\bmod{16},D(2)\equiv 4\delta\alpha^2\bmod{2^{e+2}})
\prod_{p\mid n} \eta(D(p)\equiv 4\delta\alpha^2\bmod{p^{r}},p^2\nmid D)\Bigg],
\end{align*}
where $n':=\lcm(16,4n,\prod_{p\mid n}p^2)$.
The reminder term is estimated by 
\begin{align*}
R(d\in\Nsf, d\equiv \delta\bmod{n})=O(x^{25/13+\epsilon}).
\end{align*}
\end{thm}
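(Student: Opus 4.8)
The plan is to adapt the proof of Theorem \ref{thm1}, replacing the bare progression condition by the requirement that $D$ be a fundamental discriminant and sieving out the non-square-free part. First I would record the local description of $d\in\Nsf$: since $d\in\Nsf$ is equivalent to $D$ being fundamental, this decomposes into the $2$-adic dichotomy $D\equiv 1\bmod 4$ (in which case $d=D$) or $D\equiv 8,12\bmod{16}$ (in which case $d=D/4$), together with the condition $p^2\nmid D$ at every odd prime $p$. The two $2$-adic branches produce the two summands inside the bracket; the factor-$4$ shift $d=D/4$ in the second branch accounts for the appearance of $4\delta\alpha^2$ and the modulus $2^{e+2}$, while the enlarged modulus $n'=\lcm(16,4n,\prod_{p\mid n}p^2)$ is forced by the mod-$16$ fundamental test, the factor-$4$ shift, and the need to read $D(p)$ modulo $p^2$ in order to detect square-freeness at the primes dividing $n$.

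Next I would separate the primes into three classes and exploit multiplicativity (Lemma \ref{factor} and Remark \ref{Product}). At the primes $p\mid n$ the combined condition ``$D(p)\equiv\delta\alpha^2\bmod{p^r}$ and $p^2\nmid D$'' is computed directly from Corollary \ref{cor} and Proposition \ref{count}, producing the explicit factors $\eta(D(p)\equiv\cdots,p^2\nmid D)$. At the primes $p\nmid 2n$ I would sieve the square-free condition: using $v_p(D)=v_p(D(p))$ (since $D(p)=u_p^2D$ with $p\nmid u_p$) one finds from Corollary \ref{cor} with $r=2$ and $p\mid\delta$ that $\eta(p^2\mid D)=2p/(p^3-1)$, so the local square-free density at $p$ equals $1-2p/(p^3-1)$, and multiplicativity collects these into $\prod_{p\nmid 2n}(1-2p/(p^3-1))$; the $2$-adic branch is handled explicitly. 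Assembling these local factors with the $W(\alpha;n')$ weights exactly as in Theorem \ref{thm1} yields the stated formula for $\eta(d\in\Nsf,d\equiv\delta\bmod{n})$.

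For the reminder term I would run two simultaneous truncations: a parameter $T$ cutting off $u$, whose tail is $O(x^2T^{-2+\epsilon})$ by the first estimate of Lemma \ref{ubigT}, and a parameter $V$ cutting off the square-free sieve $\sum_{\ell^2\mid D}\mu(\ell)$, whose tail is $O(x^2V^{-1+\epsilon})$ by the second estimate. The main Tchebotarev error is controlled by Lemma \ref{lem1}: each square-free sieve term of modulus $\ell^2$ contributes $\eta(\ell^2\mid D)\,v(\ell^2)x^{3/2}$, and since $\eta(\ell^2\mid D)\sim\ell^{-2}$ while $v(\ell^2)\sim\ell^{6}$, the per-term weight is $\sim\ell^{4}$; summing over $\ell<V$ gives a factor $\sim V^{5}$, while the $u$-sieve contributes $\sim T\log T$ as in Theorem \ref{thm1}. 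Hence the main error is $\ll x^{3/2}TV^{5}$ up to logarithms.

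The hard part will be balancing these three contributions jointly. Setting $x^2T^{-2}=x^2V^{-1}$ forces $V=T^2$, and equating with $x^{3/2}TV^{5}$ gives $T^{13}\sim x^{1/2}$, i.e. $T=x^{1/26}$ and $V=x^{1/13}$; all three terms then equal $x^{25/13}$, yielding $R=O(x^{25/13+\epsilon})$. The delicate point throughout is that the square-free sieve introduces moduli $\ell^2$ whose index $v(\ell^2)$ grows like $\ell^{6}$, so the crude per-term bound of Lemma \ref{lem1} must be summed carefully against the small density $\eta(\ell^2\mid D)\sim\ell^{-2}$ in order to see that the net growth is only $V^{5}$; getting this bookkeeping right, and verifying that the square-free conditions factor multiplicatively across the three classes of primes, is where the real work lies.
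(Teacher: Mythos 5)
Your proposal follows essentially the same route as the paper: the same M\"obius sieve over odd squares with the two $2$-adic branches $D\equiv 1\bmod 4$ and $D\equiv 8,12\bmod{16}$, the same two truncation parameters (your $V$ is the paper's $U$) with tails controlled by the two estimates of Lemma \ref{ubigT}, the same per-term bookkeeping $\eta(\ell^2\mid D)\,v(\ell^2)\sim \ell^4$ leading to the $x^{3/2}TU^5$ main error, and the same balancing $T=x^{1/26}$, $U=x^{1/13}$ giving $O(x^{25/13+\epsilon})$; the assembly of the main term into the Euler factor $\prod_{p\nmid 2n}(1-2p/(p^3-1))$ times the local factors weighted by $W(\alpha;n')$ also matches the paper's use of Theorem \ref{thm1} and Remark \ref{Product}. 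The argument is correct as outlined.
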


\begin{proof}
Due to Lemma \ref{ubigT}, we have
\begin{align}\label{uT}
&\pi(x;d\in\Nsf,d\equiv\delta\bmod{n})\notag\\
&=\pi(x;d\in\Nsf,d\equiv\delta\bmod{n},u<T)+O(T^{-2+\epsilon}x^2).
\end{align}
By the definition of $d$, we see that 
\begin{align*}
&\pi(x;d\in\Nsf,d\equiv\delta\bmod{n},u<T)\\
=&\sum_{\begin{subarray}{c}1\leq m\leq x\\ \text{odd}\end{subarray}}
\mu(m)\Big(\pi(x;D\equiv\delta\bmod{n},D\equiv 1\bmod{4},m^2\mid D,u<T)\\
&+\pi(x;D\equiv4\delta\bmod{4n},D\equiv 8,12\bmod{16},m^2\mid D,u<T)  \Big).
\end{align*}
Divide the sum above by 
\begin{align*}
\sum_{m<x}=\sum_{m<U}+\sum_{m\geq U}=:M_1(x,T,U)+M_2(x,T,U),
\end{align*}
where $U>0$ is a large number with $U\ll x$. 
According to Lemma \ref{ubigT}, we have 
\begin{align}
M_2(x,T,U)=O(U^{-1+\epsilon}x^2).\label{M2}
\end{align}
We further divide $M_1(x,T,U)$ by 
\begin{align*}
M_1(x,T,U)&=\sum \pi(x;\cC)=\sum \eta(\cC)\li(x^2)+\sum R(x;\cC)\\
&=:M_{11}(T,U)\li(x^2)+M_{12}(x;T,U).
\end{align*}
According to Corollary \ref{cor} and Theorem \ref{thm1}, 
we see that 
\begin{align}
\eta(D\equiv\delta\bmod{n},D\equiv 1\bmod{4},m^2\mid D, u<U)\leq 
\eta(m^2\mid D)\leq Cm^{-2}\label{mmidD}
\end{align}
for some $C>0$ and $\eta(D\equiv4\delta\bmod{4n},D\equiv 8,12\bmod{16},m^2\mid D, u<U)$ 
is similarly. Then we have 
\begin{align}
M_{11}(T,U)=\lim_{T,U\tinf}M_{11}(T,U,x)+O(U^{-1}).\label{M11}
\end{align}
On the other hand, by taking discussions similar to $S_2(T,x)$ 
in the proof of Theorem \ref{thm1}, we can estimate  
\begin{align}
|M_{2}(T,U,x)|\leq &C\sum_{m<U} n^3m^4x^{3/2}T\log{T}
=O\left( U^5T^{1+\epsilon}x^{3/2} \right).\label{M12}
\end{align}
Combining \eqref{uT}, \eqref{M2}, \eqref{M11} and \eqref{M12}, we get 
\begin{align*}
&\eta(d\in \Nsf,d\equiv \delta\bmod{n})\\&=
\sum_{\begin{subarray}{c}m\geq1\\ \text{odd} \end{subarray}}\mu(m)
\bigg(\eta(D\equiv\delta\bmod{n},D\equiv 1\bmod{4},m^2\mid D)\\
&+\eta(D\equiv4\delta\bmod{4n},D\equiv 8,12\bmod{16},m^2\mid D)\bigg),\\
&R(x;d\in \Nsf,d\equiv \delta\bmod{n})=
O(U^{-1+\epsilon}x^2)+O(T^{-2+\epsilon}x^2)+O(U^5T^{1+\epsilon}x^{3/2}).
\end{align*}
Thus the estimate of the reminder term in the theorem
follows immediately from the above with $T=x^{1/26}$ and $U=x^{1/13}$. 

We now arrange the expressions of $\eta(d\in \Nsf,d\equiv \delta\bmod{n})$. 
Change the sum over $m$ by 
\begin{align*}
&\eta(d\in\Nsf,d\equiv\delta\bmod{n})\\
=&\sum_{m:\gcd(m,2n)=1}\sum_{0\leq j_1,\cdots,j_l\leq1}\mu(m)(-1)^{j_1+\cdots+j_l}\\
&\times\bigg(\eta(D\equiv\delta\bmod{n},D\equiv 1\bmod{4},m^2\mid D,p_i^{2j_i}\mid D(\forall i))\\
&+\eta(D\equiv4\delta\bmod{4n},D\equiv 8,12\bmod{16},m^2\mid D,p_i^{2j_i}\mid D(\forall i))\bigg)\\
=&\sum_{m:\gcd(m,2n)=1}\mu(m)
\bigg(\eta(D\equiv\delta\bmod{n},D\equiv 1\bmod{4},m^2\mid D,p_i^{2}\nmid D(\forall i))\\
&+\eta(D\equiv4\delta\bmod{4n},D\equiv 8,12\bmod{16},m^2\mid D,p_i^{2}\nmid D(\forall i))\bigg).
\end{align*}
According to Theorem \ref{thm1}, we have 
\begin{align*}
&\eta(D\equiv\delta\bmod{n},D\equiv 1\bmod{4},m^2\mid D,p_i^{2}\nmid D(\forall i))\\
=&\sum_{\alpha\in\bZ_{n'm^2}^*}W(\alpha;n'm^2)\eta(m^2\mid D)
\eta(D(2)\equiv 1\bmod{4},D(2)\equiv \delta\alpha^2\bmod{2^e})\\
&\times \prod_{i=1}^{l} \eta(D(p_i)\equiv \delta\alpha^2\bmod{p_i^{r_i}},p_i^2\nmid D).
\end{align*}
Similarly, we have
\begin{align*}
&\eta\left(D\equiv4\delta\bmod{4n},D\equiv 8,12\bmod{16},m^2\mid D,p_i^{2}\nmid D(\forall i)\right)\\
=&\sum_{\alpha\in\bZ_{n'm^2}^*}W(\alpha;n'm^2)\eta(m^2\mid D)
\eta(D(2)\equiv 8,12\bmod{16},D(2)\equiv 4\delta\alpha^2\bmod{2^{e+2}})\\
&\times \prod_{i=1}^{l} \eta(D(p_i)\equiv 4\delta\alpha^2\bmod{p_i^{r_i}},p_i^2\nmid D).
\end{align*}
Thus the following formula holds.
\begin{align*}
&\eta(d\in \Nsf,d\equiv \delta\bmod{n})=\sum_{\begin{subarray}{c}m\geq1,\gcd(m,2n)=1\end{subarray}}\mu(m)\eta(m^2\mid D)
\sum_{\alpha\in\bZ_{n'm^2}^*}W(\alpha;n'm^2)\sigma(\alpha;n'),
\end{align*}
where
\begin{align*}
&\sigma(\alpha;n')\\:=& \eta(D(2)\equiv 1\bmod{4},D(2)\equiv \delta\alpha^2\bmod{2^e})
\prod_{p\mid n} \eta(D(p)\equiv \delta\alpha^2\bmod{p^{r}},p^2\nmid D)\\
&+\eta(D(2)\equiv 8,12\bmod{16},D(2)\equiv 4\delta\alpha^2\bmod{2^{e+2}})
\prod_{p\mid n} \eta(D(p)\equiv 4\delta\alpha^2\bmod{p^{r}},p^2\nmid D).
\end{align*}
By the definition of $W(*;*)$, we get 
\begin{align*}
&\eta(d\in \Nsf,d\equiv \delta\bmod{n})
=\sum_{\begin{subarray}{c}m\geq1,\gcd(m,2n)=1\end{subarray}}\mu(m)\eta(m^2\mid D)
\sum_{\alpha\in\bZ_{n'}^*}W(\alpha;n')\sigma(\alpha;n').
\end{align*}
According to Remark \ref{Product}, we have
\begin{align*}
\eta(m^2\mid D)=\prod_{p\mid m}\eta(p^i\mid D(p))=\prod_{p\mid m}\frac{2p^{3-i}}{p^3-1}
\end{align*}
for $2\nmid m$ and $m=\prod_{p\mid m}p^{i}$. 
Thus the desired result follows immediately.
\end{proof}

\noi{\bf Example.} Study the case of $n=5$. 
According to Theorem \ref{thm2}, we have
\begin{align*}
\eta(d\in \Nsf,d\equiv 0\bmod{5})=&\Omega\eta(5\mmid D(5))\big\{\eta(D(2)\equiv 1\bmod{4})\\&+\eta(D(2)\equiv 8,12\bmod{16})\big\},\\
\eta(d\in \Nsf,d\equiv 1\bmod{5})=&\Omega\left[V_1\eta(D(5)\equiv 1\bmod{5})+V_2\eta(D(5)\equiv 4\bmod{5}) \right],\\
\eta(d\in \Nsf,d\equiv 2\bmod{5})=&\Omega\left[V_1\eta(D(5)\equiv 2\bmod{5})+V_2\eta(D(5)\equiv 3\bmod{5}) \right],\\
\eta(d\in \Nsf,d\equiv 3\bmod{5})=&\Omega\left[V_1\eta(D(5)\equiv 3\bmod{5})+V_2\eta(D(5)\equiv 2\bmod{5}) \right],\\
\eta(d\in \Nsf,d\equiv 4\bmod{5})=&\Omega\left[V_1\eta(D(5)\equiv 4\bmod{5})+V_2\eta(D(5)\equiv 1\bmod{5}) \right],
\end{align*}
where 
\begin{align*}
\Omega:=&\prod_{p\neq 2,5}\left(1-\frac{2p}{p^3-1}\right)=0.69357\cdots,\\
V_1:=&W_1\eta(D(2)\equiv 1\bmod{4})+W_2\eta(D(2)\equiv 8,12\bmod{16}),\\
V_2:=&W_1\eta(D(2)\equiv 8,12\bmod{16})+W_2\eta(D(2)\equiv 1\bmod{4}).
\end{align*}

Since $W_1,W_2$ and $\eta(D(5)\equiv \delta\bmod{5})$ are given in the example
for Theorem \ref{thm1} and 
\begin{align*}
&\eta(D(2)\equiv 1\bmod{4})=\frac{19}{56},\quad \eta(D(2)\equiv 8,12\bmod{16})=\frac{37}{112},\\
&\eta(5\mmid D(5))=\eta(5\mid D(5))-\eta(25\mid D(5))=\frac{10}{31},
\end{align*}
we get the following approximations. 
\begin{align*}
\eta(d\in \Nsf,d\equiv 0)=&0.1498\cdots,\\
\eta(d\in \Nsf,d\equiv 1)=&0.0603\cdots,\quad 
\eta(d\in \Nsf,d\equiv 2)=0.0792\cdots,\\
\eta(d\in \Nsf,d\equiv 3)=&0.0780\cdots,\quad
\eta(d\in \Nsf,d\equiv 4)=0.0594\cdots.
\end{align*}

\noindent 
HASHIMOTO, Yasufumi  \\
Institute of Systems and Information Technologies/KYUSHU,\\
7F 2-1-22, Momochihama, Fukuoka 814-0001, JAPAN\\
e-mail:hasimoto@isit.or.jp


\begin{thebibliography}{ABcde}

\bibitem[Ar]{Ar} E. Artin, 
\textit{\"{U}ber die Zetafunktionen gewisser algebraischer Zahlk\"{o}rper}, 
Math. Ann. {\bf 89}(1923), 147--156.

\bibitem[Bur]{Bur} M. Burger, 
\textit{Small eigenvalues of Riemann surfaces and graphs}, 
Math. Z. {\bf 205} (1990), 395-420.

\bibitem[Bu]{Bu} P. Buser, 
\textit{Geometry and spectra of compact Riemann surfaces}, 
Progress in Math. {\bf 106}, Birkh\"{a}user, (1992).

\bibitem[Da]{Da} B. A. Datskovsky, 
\textit{A mean-value theorem for class numbers of quadratic extensions}, 
Number theory and related analysis, Contemp. Math. {\bf 143}, 179--242.

\bibitem[DH]{DH} A. Deitmar and W. Hoffmann,
\textit{Asymptotics of class numbers},
Invent. Math. {\bf 160} (2005), 647-675.

\bibitem[G]{G} C. F.~Gauss, 
\textit{Disquisitiones arithmeticae}, Fleischer, Leipzig, (1801). 

\bibitem[GH]{GH} D. Goldfeld and J. Hoffstein,  
\textit{Eisenstein series of $\frac{1}{2}$-integral weight and the mean value of real Dirichlet $L$-series}, 
Invent. Math. {\bf 80} (1985), 185--208. 

\bibitem[HarWr]{HarWr} G.H. Hardy and E.M. Wright, 
\textit{An introduction to the theory of numbers}, 
Fifth edition, Oxford University Press, 1979. 

\bibitem[HW]{HW} Y. Hashimoto and M. Wakayama, 
\textit{Splitting density for lifting about discrete groups}, 
Tohoku Math. J. {\bf 59} (2007), p.527-545.

\bibitem[He]{He} D.~Hejhal,
\textit{The Selberg trace formula of $\mr{PSL}(2,\bR)$ I, II},  
Lec. Notes in Math. {\bf 548, 1001}
Springer, (1976, 1983).

\bibitem[JK]{JK} J. Jorgenson and J. Kramer, 
\textit{On the error term of the prime geodesic theorem}, 
Forum Math. {\bf 14} (2002), 901-913.

\bibitem[Ki]{Ki} H. H. Kim,
\textit{Functionality for the exterior square of ${\rm GL}\sb 4$ and the symmetric fourth of ${\rm GL}\sb 2$ 
(with Appendix 1 by D. Ramakrishnan and Appendix 2 by Kim and P. Sarnak)},
J. Amer. Math. Soc. {\bf 16} (2003), 139--183.

\bibitem[KS]{KS} H. H. Kim and F. Shahidi,
\textit{Cuspidality of symmetric powers with applications},
Duke Math. J. {\bf 112} (2002), 177--197.

\bibitem[Ko]{Ko} S. Koyama, 
\textit{Prime geodesic theorem for arithmetic compact surfaces}, 
IMRN {\bf 1998}, 383--388.

\bibitem[Li]{Li} R. Lipschutz, 
\textit{Sitzungber}, Akad. Berlin (1865), 174--185.

\bibitem[LS]{LS} W. Luo and P. Sarnak,
\textit{Quantum ergodicity of eigenfunctions on $\psl(\bZ)\bsla H^2$},
Publ. Math. IHES, {\bf 81} (1995), 207--237.

\bibitem[LRS]{LRS} W. Luo, Z. Rudnick and P. Sarnak,
\textit{On Selberg's eigenvalue conjecture},
Geom. Funct. Anal. {\bf 5} (1995), 387--401.

\bibitem[Me]{Me} F. Mertens,
\textit{Ueber einige asymptotische Gesetze der Zahlentheorie}, 
J. Reine Angew. Math. {\bf 77} (1874), 289--338. 

\bibitem[Mo]{Mo} R. A. Mollin,
\textit{Quadratics}, CRC Press Series on Discrete Mathematics and Its Applications, 
CRC Press (1995).

\bibitem[Ra]{Ra} N. Raulf, 
\textit{Asymptotics of class numbers for progressions and for fundamental discriminants},
Forum Math. {\bf 21} (2009), 221-257.

\bibitem[Sa1]{Sa1} P. Sarnak, 
\textit{Class numbers of indefinite binary quadratic forms},
J. Number Theory, {\bf 15} (1982), 229-247.

\bibitem[Sa2]{Sa2} P. Sarnak, 
\textit{The arithmetic and geometry of some hyperbolic three-manifolds},
Acta Math. {\bf 151} (1983), 253--295.

\bibitem[Sa3]{Sa3} P. Sarnak, 
\textit{Class numbers of indefinite binary quadratic forms II},
J. Number Theory, {\bf 21} (1985), 333--346.

\bibitem[Se]{Se} A. Selberg, 
\textit{Harmonic analysis and discontinuous groups in weakly symmetric Riemannian spaces 
with applications to Dirichlet series}, 
J. Indian Math. Soc. (N.S.), {\bf 20} (1956), 47--87. 

\bibitem[Sh]{Sh} T. Shintani, 
\textit{On zeta functions associated with the vector space of quadratic forms}, 
J. Fac. Sci. Univ. Tokyo Sect. 1A Math. {\bf 22} (1975), 25--65.

\bibitem[Si]{Si} C. L. Siegel, 
\textit{The average measure of quadratic forms with given determinant and signature},
Ann. of Math. II, {\bf 45} (1944), 667-685.

\bibitem[Su]{Su} T. Sunada, 
\textit{$L$-functions in geometry and some applications}, 
Curvature and topology of Riemannian manifolds (Katata, 1985), 266--284, 
Lecture Notes in Math. {\bf 1201},
Springer, Berlin, 1986.

\bibitem[Ta]{Ta} T. Takagi,
\textit{\"{U}ber eine theorie des relativ abelschen Zahlk\"{o}rpers}, 
Journal of the College of Science, Imperial University of Tokyo {\bf 41}(1920), 1--133.

\bibitem[Tc]{Tc} N. Tchebotarev, \textit{Die Bestimmung der Dichtigkeit einer Menge von Primzahlen, 
welch zu einer gegebenen Substitutionsklasse gehoren}, Math. Ann. {\bf 95}(1926), 191--228. 


\bibitem[Vi]{Vi} I. M. Vinogradov,  
\textit{Improvement of the remainder term of some asymptotic formulas},
Izvestiya Akad. Nauk SSSR. Ser. Mat. {\bf 13}, (1949), 97-110 

\bibitem[Zo]{Zo} P.G. Zograf,  
\textit{Fuchsian groups and small eigenvalues of the Laplacian operator},
Soviet Math. Dokl. {\bf 26} (1984), 1618-1621. 

\end{thebibliography}
\end{document}